\documentclass[a4paper,10pt,leqno,english]{smfart}
\usepackage{aeguill}
\usepackage{enumerate}
\usepackage{amssymb,amsmath,latexsym,amsthm}
\usepackage[T1]{fontenc}
\usepackage{newpxtext,newpxmath}
\usepackage[scaled=0.92]{helvet}
\usepackage{microtype}
\usepackage{smfthm}      
\usepackage{geometry}   
\usepackage{url}   
\usepackage[french, main=english]{babel}
\usepackage{mathrsfs}
\usepackage{xcolor}
\usepackage{comment}
\definecolor{Duruo}{HTML}{426ab3}
\definecolor{Zaohong}{HTML}{c32136}
\definecolor{Jianghuang}{HTML}{e2c027}
\definecolor{Jiaoqing}{HTML}{87723e}
\usepackage{doi}
\usepackage{hyperref}
\usepackage{relsize}
\usepackage{verbatim}
\usepackage{tikz}

\hypersetup{
    pdftoolbar=true,        % show Acrobat’s toolbar?
    pdfmenubar=true,        % show Acrobat’s menu?
    pdffitwindow=false,     % window fit to page when opened
    pdfstartview={FitH},    % fits the width of the page to the window
    pdftitle={},    % title
    pdfauthor={},     % author
    colorlinks=true,       % false: boxed links; true: colored links
    linkcolor=Duruo,          % color of internal links
    citecolor=Zaohong,        % color of links to bibliography
    urlcolor=Jianghuang,        % color of links to external link
    filecolor=Jiaoqing,      % color of file links
            }           
    
\numberwithin{equation}{section}
\theoremstyle{plain}
\newtheorem{bigthm}{Theorem}
\newtheorem{bigcond}[bigthm]{Condition}
\renewcommand{\thebigthm}{\Alph{bigthm}} % "letter-numbered" theorems

\makeatletter
\newcommand{\settheoremtag}[1]{% \settheoremtag{<tag>}
	\let\oldthebigthm\thebigthm% Store \thetheorem
	\renewcommand{\thebigthm}{#1}% Redefine it to a fixed value
	\g@addto@macro\endbigcond{% At \end{theorem}, ...
		\addtocounter{bigthm}{-1}% ...restore theorem counter value and...
		\global\let\thebigthm\oldthebigthm}% ...restore \thetheorem
}
\makeatother

\renewcommand{\a}{\alpha}
\newcommand{\bC}{\mathbb{C}}

\newcommand{\bQ}{\mathbb{Q}}
\newcommand{\bR}{\mathbb{R}}
\newcommand{\bP}{\mathbb{P}}

\newcommand{\cO}{\mathcal{O}} 
\newcommand{\cA}{\mathcal{A}}

\newcommand{\cC}{\mathcal{C}}

\newcommand{\cE}{\mathcal{E}}

\newcommand{\cI}{\mathcal{I}}

\newcommand{\rH}{\mathrm{H}}

\newcommand{\ba}{\mathbf{a}}
\newcommand{\bb}{\mathbf{b}}
\newcommand{\bc}{\mathbf{c}}

\newcommand{\la}{\langle}
\newcommand{\ra}{\rangle}
\renewcommand{\d}{\partial}
\renewcommand{\b}{\bar}
\newcommand{\vphi}{\varphi}

\renewcommand{\r}{\rho}
\renewcommand{\t}{\tau}

\newcommand{\MA}{\mathrm{MA}}
\newcommand{\Ric}{\mathrm{Ric}}

\newcommand{\vol}{\mathrm{vol}}
\newcommand{\lcs}{\mathrm{Nklt}}
\newcommand{\Capa}{\mathrm{Cap}}

\newcommand{\psh}{\mathrm{PSH}}
\newcommand{\supp}{\mathrm{supp}}

\begin{document}
    
\title{$L^\infty$-estimate of K\"ahler-Einstein potentials on stable varieties}
\date{\today}
\author{Rui Tang}
\email{rui.tang@math.univ-toulouse.fr / trui69954@gmail.com}
\address{Institut de Mathématiques de Toulouse; UMR 5219, Université de Toulouse; CNRS, UPS, 118 route de Narbonne, F-31062 Toulouse, France}

\begin{abstract}
We study the asymptotic behavior of Kähler--Einstein potentials on stable varieties near the singularities. Using iterated logarithmic functions associated with a defining function of the non-klt locus, we obtain refined lower bounds for the Kähler--Einstein potential, improving previous estimates of Di Nezza--Guedj--Guenancia and Datar--Fu--Song. Under additional assumptions on the log resolution, we also establish upper bounds. The proofs are based on the construction of explicit subsolutions and supersolutions for degenerate complex Monge--Ampère equations together with refined integrability estimates in pluripotential theory.
\end{abstract}

\maketitle
\tableofcontents

\section*{Introduction}
The K\"ahler-Einstein metric is a central object in complex differential geometry. Given a complex manifold $X$, a K\"ahler-Einstein metric on $X$ is a K\"ahler metric whose Ricci curvature is proportional to the metric itself. The existence and uniqueness of Kähler–Einstein metrics on canonically polarized (i.e. $K_X$ ample) compact complex manifolds were established in the celebrated works of Aubin~\cite{aubin1976equations} and Yau~\cite{Yau78}. 

Over the past two decades, this theory has been extended to canonically polarized varieties $X$ with mild singularities arising in the Minimal Model Program (MMP). Eyssidieux–Guedj–Zeriahi~\cite{EGZ09} constructed Kähler–Einstein metrics on varieties with Kawamata log terminal (klt) singularities, while Berman–Guenancia~\cite{BG13stable_var} treated the log canonical (lc) case. More generally, if $X$ has semi-log canonical singularities, Berman–Guenancia~\cite{BG13stable_var} showed that $X$ admits a K\"ahler-Einstein metric if and only if $X$ is a stable variety in the sense of Kollár-Shepherd-Barron and Alexeev (KSBA).
These singular K\"ahler-Einstein metrics are defined to be a K\"ahler form $\omega_{KE}=\omega+dd^c\vphi_{KE}$ on the regular part $X_{reg}$ of $X$, where $\omega\in c_1(K_X)$, such that 
$$\Ric(\omega_{KE})=-\omega_{KE} \text{ and } \int_{X_{reg}}\omega_{KE}^n=c_1(K_X)^n.$$
Here $\vphi_{KE}$ is called a K\"ahler-Einstein potential. Due to the complexity of singularities, the behavior of $\omega_{KE}$ near $X_{sing}$ is quite mysterious. A fundamental open problem is the following
\paragraph*{\textbf{Question}} \textit{What is the asymptotic behavior of $\vphi_{KE}$ near $X_{sing}$?}
\paragraph*{\textbf{Known results}}If $X$ has klt singularities, the Kähler–Einstein potential is known to be locally bounded near $X_{sing}$. In contrast, if $X$ has lc singularities, the potential necessarily diverges to $-\infty$ near the non-klt locus $\lcs(X)$ (see Definition \ref{lcs}).
Song~\cite{Song} proved that the Kähler–Einstein potential is locally bounded outside the non-klt locus. More recently, Di Nezza–Guedj–Guenancia~\cite{DGG23familiesKE,DGG23familiesKEcorr} obtained a more precise estimate: 

\paragraph*{\textbf{Fact A}} For any $\epsilon>0$, there exists a constant $C_\epsilon$ such that
\begin{equation}\label{DGG lowerbdd}
    \vphi_{KE}\geq-(n+\nu_X+\epsilon)\log(-\log|s|)+C_\epsilon,
\end{equation}
where $n=\dim X$, $\nu_X\in\{1,\dots,n\}$ is an integer defined as in Definition \ref{define_nu_X} below, and $(s=0)$ is any reduced divisor containing the non-klt locus of $X$.

\medskip

On the other hand, Datar–Fu–Song~\cite[Theorem 1.1.(2)]{DFS23} proved the existence of local K\"ahler-Einstein metric near isolated log canonical singularities and obtained a lower bound with a worse coefficient:

\paragraph*{\textbf{Fact B}} Let $(X,x)$ be a germ of an isolated log canonical (non-log terminal) singularity. let $\pi:Y\to X$ be a log resolution, and let $E$ be the reduced exceptional divisor. Fix a defining section $s_E$ of $E$ and a Hermitian metric $|\cdot|_h$ on $\cO_Y(E)$. Then there exists a local K\"ahler-Einstein metric $\omega=dd^c\vphi_{KE}$ near $x$, and for any $\epsilon>0$, there exists a constant $C_\epsilon$ such that 
\begin{equation}\label{DFS lowerbdd}
    \pi^*\vphi_{KE}\geq-(2n+\epsilon)\log(-\log |s_E|_h^2)+C_\epsilon.
\end{equation}

\medskip

Nevertheless, a quantitative description of growth of the potential near the non-klt locus is still missing in general, except for some particular examples.  

\begin{exem}
    Let $X$ be Baily-Borel-Satake compactification of a ball quotient of dimension $n$, then it has isolated singularities and admits a log resolution $\pi:Y\to X$ whose exceptional divisor over each singularity is an abelian variety with discrepancy $-1$. In this case, $\nu_X=1$ and 
    $$\pi^*\vphi_{KE}=-(n+1)\log(-\log |s_E|^2)+\cO(1),$$
    where $s_E$ is a defining section of the exceptional divisor.
\end{exem}

\begin{exem}(see \cite[p. 54-57]{kobayashi85})
    Let $X$ be a Hilbert modular surface, then X has isolated singularities. Let $\pi:Y\to X$ be the minimal resolution, then the exception divisor over each singularity is a cycle of $\bP^1$'s, and each $\bP^1$ has discrepancy $-1$. In this case, $n=\nu_X=2$, and $$\pi^*\vphi_{KE}=-4\log(-\log |s_E|^2)+\cO(1),$$
    where $s_E$ is a defining section of the exceptional divisor.
\end{exem}

\medskip

The goal of this paper is to improve the lower bound (\ref{DGG lowerbdd}), (\ref{DFS lowerbdd}) by using iterated logarithmic functions associated with the non-klt locus, and to obtain an explicit upper bound which also diverges to $-\infty$ near $\lcs(X)$ for the K\"ahler–Einstein potential in some special cases.

We first introduce a function $\r$ on a general stable variety which cut out the non-klt locus:
\paragraph*{\textbf{Set-up}}\label{setup}
(\textit{Cut out non-klt locus}) Let $X$ be a stable variety of dimension $n$. Let $L$ be a (very) ample line bundle on $X$, and let $|\cdot|_h$ be a Hermitian metric on $L$. Let $\cI$ be the ideal sheaf of the non-klt locus of $X$ (see Definition \ref{lcs}). After replacing $L$ by a large multiple of it, we can find some sections $\sigma_1,\dots,\sigma_k\in \rH^0(X,L)$ such that they globally generate $\cI \otimes L$. Then the vanishing set of the function $\r:= \sum_{i=1}^k|\sigma_i|_h^2$ is precisely the non-klt locus of $X$. By rescaling the Hermitian metric, we always assume $\r<e^{-1}$, then $-\log\r>1$.

\medskip

The two examples above show that the asymptotic behavior of $\vphi_{KE}$ near $\lcs(X)$ is related to the combinatorial structure of exceptional divisors on a log resolution. Motivated by this, we introduce the following
\begin{defi}\label{define_nu_X}
    Let $X$ be a compact complex manifold of dimension $n$, and let $D=\sum_{i\in I}D_i$ be a reduced divisor with simple normal crossing support. We define the following function $m_{(X,D)}:D\to\{1,2,\dots,n\}$ by 
    $$m_{(X,D)}(x)=\#\{i\in I : x\in \supp(D_i)\}.$$
    And we define $\nu(X,D)$ to be the maximal number of $D_i$'s that intersect simultaneously:
    \begin{align*}
        \nu(X,D)&=\max_{x\in D}m_{(X,D)}(x)
    \end{align*}
    Then we always have $1\leq\nu(X,D)\leq n$.
    
    Now let $X$ be a stable variety, let $\pi:(Y,E=\sum_i a_iE_i)\to X$ be a log resolution of $X$, where $E_i$ is either an exceptional divisor or set above the codimension 1 singularities. Then we have $a_i\geq-1$. Let $E_{lc}=\sum_{a_i=-1}E_i$. We denote $\nu_X(\pi)=\nu(Y,E_{lc})$ and introduce the following invariants:
    $$\nu_X=\inf_{\pi}\nu_X(\pi),$$
    where $\pi$ runs over all log resolutions of $X$. Since $\nu_X(\pi)\in \{1,\dots,n\}$ for every log resolution $\pi$, the infimum is always attained by some log resolution.
\end{defi}

\begin{rema}
    The quantity $\nu(X,D)-1$
    coincides with the dimension of the dual complex associated to $(X,D)$ in the sense of de Fernex, Koll\'ar and Xu, see \cite{dualcomplex}.
\end{rema}

Our results can be divided into two parts: the first part is an unconditional lower bounded, which improves the lower bound obtained in (\ref{DGG lowerbdd}); the second part concerns an upper bounded (which also grows in $\log\log$ form) under some geometrical assumption.

\subsection{Unconditional lower bound}
\begin{bigthm}[= \textit{Theorem \ref{lowerbdd=thma}}]\label{thma}
    Let $X$ be a stable variety of dimension $n$, and $\omega_X \in c_1(K_X)$ be a smooth K\"ahler form. 
    Let $\omega=\omega_X+dd^c\varphi_{KE}$ be the K\"ahler-Einstein metric on $X$.
    Then for any $k\geq 1$ and $\epsilon>0$, there exists a constant $C_{k,\epsilon}$ such that 
        $$\varphi_{KE} \geq -(n+\nu_X)\sum_{j=1}^k\log^{(j)}(-\log\r) -\epsilon\log^{(k)}(-\log\r) + C_{k,\epsilon}.$$
    Here $\r$ is the function defined in \hyperlink{setup}{Set-up}, and $\log^{(j)}(\cdot):=\underbrace{\log\circ\dots\circ\log}_{\text{j times}}\ (\cdot)$.
\end{bigthm}
\begin{rema}
    When $k=1$, our result recovers the lower bound (\ref{DGG lowerbdd}):
    $$\vphi_{KE}\geq-(n+\nu_X+\epsilon)\log(-\log\r)+C_\epsilon.$$
    The above two examples show that this lower bound is almost sharp (at least for isolated singularities), with an error term $-\epsilon\log(-\log\r)$. Our result for $k\geq2$ shows that this error term can be replaced by less singular terms consisting of iterated logarithmic functions.
\end{rema}

As we mentioned before, we know qualitatively that $\vphi_{KE}\to-\infty$ near $\lcs(X)$, but to understand the asymptotic behavior of $\vphi_{KE}$, we would like to have an upper bound which also diverges to $-\infty$ near $\lcs(X)$. In the spirit of the above two examples, some multiple of $-\log(-\log\r)$ should be a candidate. The general case however seems to be difficult, we only prove an upper bound under some additional geometric assumption on the resolution.

\subsection{Conditional upper bound}
Our first upper bound is the following
\begin{bigthm}[= \textit{Theorem \ref{thmb=upperbdd}}]\label{thmb}
    Let $X$ be as in Theorem \ref{thma}. Let $d=\dim X_{sing}$.
    Assume $X$ admits a log resolution $\pi:Y\to X$ satisfying
    \begin{itemize}
        \item[(1)] $\pi$ is a log crepant resolution (i.e. every exceptional divisor of $\pi$ has discrepancy -1);
        \item[(2)] $\pi$ is moreover a log resolution of $(X,\cI)$, where $\cI$ is the ideal sheaf of $\lcs(X)$ ($=X_{sing}$ under condition (1)).
    \end{itemize}
    Then there is a constant $C>0$ such that
    $$\vphi_{KE}\leq -(n-d+1)\log(-\log\r)+C.$$ 
\end{bigthm}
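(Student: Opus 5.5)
Pulling back to the resolution, we use a supersolution comparison. Set $\psi:=\pi^*\vphi_{KE}$ on $Y\setminus E$. By condition (1), $K_Y+E=\pi^*K_X$ with $E$ the reduced exceptional divisor. Hence $\psi$ solves
$$(\pi^*\omega_X+dd^c\psi)^n=e^{\psi}\frac{\Omega_Y}{|s_E|^2}$$
on $Y\setminus E$, where $\Omega_Y$ is a smooth volume form. By condition (2), $\pi^*\cI\cdot\cO_Y=\cO_Y(-F)$ for some effective divisor $F$ supported on $E$. Therefore $\pi^*\r=|s_F|^2e^{-\phi}$ with $\phi$ smooth, and near $E$ we have $-\log\pi^*\r\asymp-\log|s_E|^2$ up to multiplicative constants. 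It thus suffices to prove $\psi\le-(n-d+1)\log(-\log|s_E|^2)+C$.

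We then build a Poincaré-type supersolution. Since $\pi^*\omega_X$ is only semipositive and degenerates along $E$ in the directions contracted by $\pi$, it is natural to consider
$$u:=-(n-d+1)\log(-\log\pi^*\r)+A.$$
The plan is to show that $\pi^*\omega_X+dd^c u\ge0$ (after possibly adding $\epsilon\pi^*\omega_X$ and rescaling), and that
$$(\pi^*\omega_X+dd^cu)^n\le e^{u}\frac{\Omega_Y}{|s_E|^2}$$
near $E$. The mechanism is as follows. Computing $dd^c(-\log(-\log\r))=\frac{dd^c\log\r\cdot(-\log\r)^{-1}}{1}+\frac{d\log\r\wedge d^c\log\r}{(\log\r)^2}$ in local coordinates on $X$, the function $\log\r=\log\sum|\sigma_i|^2$ is psh, and $dd^c\log\r$ has rank about $n-d$ in the directions transverse to $X_{sing}$ at generic points. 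Upstairs, the pullback of $dd^c\log\r$ plus $\pi^*\omega_X$ gives positivity in those $n-d$ transverse directions, together with the $d$ directions along $\pi(E)$. The gradient term contributes a Poincaré factor $|s_E|^{-2}(\log|s_E|^2)^{-2}$ in the normal direction. Altogether the top power behaves like $\Omega_Y/\big(|s_E|^2(-\log|s_E|)^{n-d+1}\big)$, where $n-d+1$ counts one power from the gradient and $n-d$ from $(-\log\r)^{-1}dd^c\log\r$ in the transverse directions. This is exactly $e^{u}\Omega_Y/|s_E|^2$. We then fix $A$ and the coefficient so that the inequality holds with a constant. Away from $E$, $u$ is bounded and is handled by the constant.

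Next comes the comparison. We compare $\psi$ with $u$ via the maximum principle for the Monge–Ampère equation with $e^\psi$ on the right-hand side. At a maximum of $\psi-u$ (perturbed by $\delta\log|s_E|^2$ if necessary, to force the maximum to lie off $E$; this is allowed since $\psi$ is bounded above and $u\to-\infty$ only like $\log\log$), the inequality $dd^c\psi\le dd^c u$ gives
$$e^{\psi}\le e^{u}(1+O(\delta)).$$
Letting $\delta\to0$ yields $\psi\le u+C$. Alternatively, one can use the pluripotential domination principle for the minimal-energy solution of Berman–Guenancia.

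The main obstacle is the supersolution computation. The upper estimate of $(\pi^*\omega_X+dd^cu)^n$ must hold uniformly near all of $E$, including at points where several components meet and where $dd^c\log\r$ drops rank. We also need the positivity of $\pi^*\omega_X+dd^cu$, since the Hessian of $-\log(-\log\r)$ has a negative part $(-\log\r)^{-1}dd^c\log\r$ with the wrong sign. The first thing to try is to compute in the monomial coordinates given by condition (2), where $\pi^*\r\sim\prod|z_i|^{2b_i}$ times a unit. This should reduce the estimate to an explicit determinant, bounded using $\r<e^{-1}$ and the ampleness of $\pi^*\omega_X$ on the non-contracted directions.
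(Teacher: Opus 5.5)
\textbf{There is a genuine gap.} Your overall plan is the paper's: the candidate super-solution $u=-(n-d+1)\log(-\log\pi^*\rho)+A$, the reduction $-\log\pi^*\rho\asymp-\log|s_E|^2$ via condition (2), and a comparison argument. The gap is the super-solution inequality $(\theta+dd^cu)^n\le Ce^{u}\,dV_Y/\prod_i|s_i|^2$. You defer it as ``the main obstacle'', but it is the whole content of the proof, and the tools you name for it do not close it. The rank of $dd^c\log\rho$ and the positivity of $\pi^*\omega_X$ in non-contracted directions are lower-bound information and play no role here. You also note that $\pi^*\omega_X$ degenerates along $E$, but you never turn this into an estimate.

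\textbf{What the estimate needs.} Write $\tau=\log\pi^*\rho$. Since $d\tau\wedge d^c\tau$ has rank one, the top power is linear in it. Only the upper bounds $dd^c\tau\le C\omega_Y$ and $d\tau\wedge d^c\tau\le C\bigl(\sum_i|z_i|^{-2}\sqrt{-1}dz_i\wedge d\bar z_i+\omega_Y\bigr)$ are then needed. After this reduction everything is bounded except the terms
$$\frac{\theta^j\wedge\omega_Y^{n-1-j}\wedge\sqrt{-1}dz_i\wedge d\bar z_i}{|z_i|^2\,\omega_Y^n}\,(-\tau)^{j-d}\prod_k|z_k|^2,\qquad j>d.$$
At generic points of $(z_i=0)$ these grow like $(-\tau)^{j-d}$ unless $\theta^j$ degenerates quantitatively there. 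For $j=n-1$ the growth is $(-\tau)^{n-1-d}$, unbounded as soon as $d<n-1$.

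\textbf{The missing estimate.} You need
$$\theta^j\wedge\sqrt{-1}dz_i\wedge d\bar z_i\le C|z_i|^2\,\omega_Y^{j+1}\qquad(j>d).$$
The paper proves it as follows. Embed $X$ locally into $\bC^N$ and dominate $\theta^j$ by a combination of $f_I^*(\omega_{\bC^j})^j$, where $f_I$ are coordinate projections to $\bC^j$ composed with $\pi$. Since $\pi(E_i)\subset X_{sing}$ has dimension $\le d<j$, every Jacobian minor $J_K(f_I)$ with $i\notin K$ vanishes on $(z_i=0)$, so it is divisible by $z_i$. Condition (1) is used exactly to absorb the $|z_i|^{-2}$ pole of the gradient term into $\prod_k|s_k|^{-2}$.

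\textbf{Your power count.} The bookkeeping should be corrected. The gradient term contributes $(-\tau)^{-2}$, not one power. At most $d$ of the remaining $n-1$ slots can be filled by $\theta$ without gaining a factor $|z_i|^2$. So at least $n-1-d$ slots carry $(-\tau)^{-1}$, giving $2+(n-1-d)=n-d+1$.

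\textbf{The comparison step.} Your maximum-principle argument does not give $e^{\psi}\le e^{u}(1+O(\delta))$. The perturbation adds $\delta\Theta_h(E)=O(\delta)\omega_Y$ to $\theta+dd^cu$, and $\theta+dd^cu$ degenerates near $E$ in the directions contracted by $\pi$. So the added term is not a relative $O(\delta)$ error, and the maximum point may approach $E$ as $\delta\to0$. Use instead the comparison principle in $\cE(Y,\theta)$, as the paper does. There $u$ is $\theta$-psh after rescaling $h$ and lies in $\cE^1(Y,\theta)$, $\pi^*\vphi_{KE}\in\cE(Y,\theta)$, and no perturbation is needed.
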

We shall note that this upper bound is far from being sharp, which can already be seen in the case of the Hilbert modular surface. However, the coefficient $(n-d+1)$ can not be improved for a general stable variety. Indeed, if $X$ has codimension 1 singularities, then $d=n-1$ and $n-d+1=2$. Let $\nu:X^\nu\to X$ be the normalization, then there exists a reduced divisor $C$ on $X^\nu$, called the conductor divisor, such that $\nu^*K_X=K_{X^\nu}+C$ and $K_{X^\nu}+C$ is ample. The pull back of the K\"ahler-Einstein metric on $X$ coincide with the K\"ahler-Einstein metric on the pair $(X^\nu,C)$. In \cite{GW16KEboundary_lc}, it is proved that $\omega_{KE}$ has cusp singularities near $C\cap X^\nu_{reg}$, i.e. $\vphi_{KE}=-2\log(-\log|s_C|)+\cO(1)$, where $s_C$ is a section of $\cO_{X^\nu}(C)$ that cuts out $C^\nu$. Thus the coefficient of $-\log(-\log\r)$ can not be larger than $2$.

When $\pi$ has another special form, we will be able to deduce a similar upper bound and a possibly better lower bound:
\begin{bigthm}[= \textit{Theorem \ref{2sideest=thmc}}]\label{thmc}
    Let $X$ be as in Theorem \ref{thma}. Let $d=\dim X_{sing}$. Assume $X$ admits a log resolution $\pi:Y\to X$ that satisfies the following two conditions:
    \begin{itemize}
        \item[(1)] $\pi$ is a log crepant resolution;
        \item[(2')] $\pi$ is a composition of finitely many blow-ups along smooth centers.
    \end{itemize}
    We write $K_Y=\pi^*K_X-E$. Let $s$ be a section of $\cO_Y(E)$ that cuts out $E$ and let $h$ be a Hermitian metric on $\cO_Y(E)$. Then we have
    $$-(n-d+1)\log(-\log|s|^2_{h})+\cO(1)\geq\pi^*\vphi_{KE}\geq-(n+\nu_X(\pi))\log(-\log|s|^2_{h})+\cO(1).$$
\end{bigthm}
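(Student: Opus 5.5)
The plan is to pull the equation back to $Y$ and compare $\pi^*\vphi_{KE}$ with explicit barriers built from $|s|_h^2$ rather than from $\r$. Under condition (1), $K_Y+E=\pi^*K_X$. So $\psi:=\pi^*\vphi_{KE}$ solves
$$(\pi^*\omega_X+dd^c\psi)^n=e^{\psi}\,\frac{dV_Y}{|s|_h^2},$$
up to a smooth positive density, where $dV_Y$ is a smooth volume form on $Y$. The class $\pi^*c_1(K_X)$ is only semipositive and big. The basic tool is therefore the comparison/domination principle for finite-energy (or full Monge--Ampère mass) solutions of this equation, in the form already used for Theorem \ref{thma}: a subsolution with full mass lies below $\psi$, and a supersolution lies above it.

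\textbf{Lower bound.} It suffices to adapt the subsolution from the proof of Theorem \ref{thma} with $k=1$ to $Y$, using $|s|^2_h$ directly. Let $L=-\log|s|^2_h$, written locally near a point where $m$ components meet as $L=\sum_{i\le m}L_i+O(1)$. Set
$$u=-(n+\nu_X(\pi))\log L+\delta\,\theta,$$
where $\theta$ is a smooth correction. Because condition (2') gives a composition of blow-ups along smooth centers, there is an effective divisor $F$, supported on $E$, with $\pi^*\omega_X-\delta\Theta_h(F)$ Kähler on $Y$ for small $\delta$. This lets us replace semipositivity by positivity at the price of a term $\delta\log|s_F|^2$. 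That term is absorbed by choosing the $\log L$ coefficient with the standard Poincaré-type cusp computation. Each cusp factor contributes $dd^c(-\log L_i)\gtrsim \frac{i\,ds_i\wedge d\bar s_i}{|s_i|^2L_i^2}$. Hence $(dd^c u)^n\ge c\,\frac{dV}{|s|^2\prod L_i^2}\cdot$(other directions). Comparing with $e^u|s|^{-2}=|s|^{-2}L^{-(n+\nu)}$, and using $\prod_{i\le m}L_i^{2}\le L^{2m}$ with $m\le\nu_X(\pi)$ together with the AM-GM-type inequality for the remaining $n-m$ directions, we get the inequality with the exact coefficient $n+\nu_X(\pi)$, not $n+\nu+\epsilon$. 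Here the smoothness of the centers gives uniform local models. The constant is then adjusted by subtracting $C$.

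\textbf{Upper bound.} This is the main obstacle. I would build a supersolution $v=-(n-d+1)\log L+C$. The key geometric input from (2') is the last blow-up, whose center $Z$ has $\dim Z\le d$, so the normal directions have rank $\ge n-d$. Near a generic point of an exceptional divisor $E_i$ lying over $Z$, $\pi^*\omega_X$ degenerates only along the fibers $\bP^{n-\dim Z-1}$. Its density is comparable to $|s_{E_i}|^{2(n-\dim Z-1)}dV_Y$. One then has to show
$$(\pi^*\omega_X+dd^c v)^n\le e^{v}\frac{dV_Y}{|s|^2}=\frac{dV_Y}{|s|^2L^{n-d+1}}$$
away from a set of capacity zero. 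Inductively over the blow-ups, $\pi^*\omega_X\le C\,dd^c$ of a potential comparable to $|s|^{2/N}$. Using the cusp term $dd^c(-\log L)$, which has only one "$L^{-2}$" direction, the mass $\int_{\{L>t\}}(\pi^*\omega_X+dd^cv)^n$ decays like $t^{-(n-d)}$, i.e. like $L^{-(n-d+1)}$ after differentiating. This matches $e^v|s|^{-2}$ once integrated. Rather than a pointwise supersolution, I would compare masses on sublevel sets $\{\psi<v-t\}$, as in Ko{\l}odziej/Guedj--Zeriahi-style capacity arguments, since pointwise control at deep intersections is hard. Finally, by Theorem \ref{thmb}, $-\log|s|^2_h$ and $-\log\r$ are comparable up to $O(1)$ after $\log$, which connects the two normalizations.
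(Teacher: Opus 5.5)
There are genuine gaps in both halves of the proposal.

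\textbf{Lower bound.} Your computation treats $-\log L$ as if it were $\sum_i(-\log L_i)$, which it is not. Off $E$ one has
\[
dd^c(-\log L)=-\frac{\Theta_h(E)}{L}+\frac{dL\wedge d^cL}{L^2},
\]
and the singular part $dL\wedge d^cL/L^2$ has rank one. So $(\theta+dd^cu)^n$ contains at most one factor carrying a pole $|z_i|^{-2}$. Near a point where $p\geq 2$ components of $E$ meet, it is $O\bigl(1+L^{-2}\sum_{i\le p}|z_i|^{-2}\bigr)dV_Y$. By contrast, $e^{u}|s|_h^{-2}dV_Y$ is of order $L^{-(n+\nu)}\prod_{i\le p}|z_i|^{-2}dV_Y$, which is much larger along $|z_1|=\dots=|z_p|\to0$. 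Hence your $u$, and indeed any multiple of $-\log L$ plus a smooth term, fails the subsolution inequality there.

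The two obvious repairs also fail. The product potential $\sum_i-\log L_i$ has the right rank, but its curvature terms $-\Theta_{h_i}(E_i)/L_i$ are no longer dominated by $\theta$, because an individual $-E_i$ is in general not $\pi$-nef. The term $\delta\log|s_F|^2_h$ does not help either. It is far more singular than $\log L$, so it cannot be absorbed into the coefficient of $\log L$, and a subsolution containing it only gives $\pi^*\vphi_{KE}\geq\delta\log|s_F|^2_h-O(\log L)$.

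The missing idea is the paper's construction. Put the $\pi$-ample combination inside the $\log\log$, and do this $\nu$ times. Take perturbations $\bc^1,\dots,\bc^\nu$ of a $\pi$-ample vector $\ba$, chosen so that each $\frac1\nu\theta-\sum_ic_i^\ell\Theta_{h_i}(E_i)$ is K\"ahler and every submatrix $[c_k^\ell]_{k\in K}$ with $|K|\le\nu$ has rank $|K|$. Then, after rescaling the $h_i$, each $\phi_\ell=-\log(-\log\prod_i|s_i|^{2c_i^\ell}_{h_i})$ is $\frac{1}{n+\nu}\theta$-psh. The K\"ahler forms $(\theta-\sum_ic_i^\ell\Theta_{h_i}(E_i))/(-\tau_\ell)$ supply the $n-p$ transverse directions, each with weight $L^{-1}$. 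Lemma \ref{linear algebra} shows that $(\sum_\ell d\tau_\ell\wedge d^c\tau_\ell)^p/\tau^{2p}$ carries the full pole $\prod_{i\le p}|z_i|^{-2}$. The total weight is $L^{-(n+p)}\geq L^{-(n+\nu)}$, which gives the exact coefficient.

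\textbf{Upper bound.} Here no supersolution is actually produced.
\begin{itemize}
\item Matching the masses of $\MA(v)$ and $e^{v}|s|_h^{-2}dV_Y$ on the sets $\{L>t\}$ cannot be fed into the comparison principle. That principle needs $\MA(v)\le e^{v+C}|s|_h^{-2}dV_Y$ as measures on $\{\pi^*\vphi_{KE}>v+C\}$, and this set has no relation to the level sets of $L$.
\item You also need $v\in\cE(Y,\theta)$. But $-\log L$ is not $\theta$-psh unless $-E$ is $\pi$-nef, which is not guaranteed. The paper uses $(n-d+1)\phi_\ell$ instead, which differs from $-(n-d+1)\log L$ by $O(1)$.
\end{itemize}

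The pointwise estimate is in fact tractable, and the relevant input is neither the last blow-up centre nor the density of $\theta^n$. Since $d\tau_\ell\wedge d^c\tau_\ell$ has rank one, it appears at most once in the expansion of $(\theta+(n-d+1)dd^c\phi_\ell)^n$. The term $\theta^j\wedge\omega_Y^{n-1-j}\wedge d\tau_\ell\wedge d^c\tau_\ell/\tau_\ell^2$ carries weight $L^{-(n+1-j)}$. This is $\lesssim L^{-(n-d+1)}$ exactly when $j\le d$, and in that case its pole $|z_i|^{-2}$ is absorbed by $\prod_i|s_i|^{-2}_{h_i}$ on the right-hand side. For $j>d$ one needs $\theta^j\wedge\sqrt{-1}dz_i\wedge d\bar z_i=O(|z_i|^2)\,\omega_Y^{j+1}$. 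This holds because every $E_i$ maps into $X_{sing}$, which has dimension $\le d$; this is the Claim in the proof of Theorem \ref{thmb=upperbdd}.

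Finally, you do not need to compare with $\rho$ through Theorem \ref{thmb}, whose hypothesis (2) is not assumed here. All barriers are built from the $s_i$ directly.
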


\begin{rema}
    By definition we have $\nu_X(\pi)\geq \nu_X$, it is not clear that if the equality holds or not under these assumptions. If the equality holds, then we do get a sharp lower bound that improve the lower bound in Theorem \ref{thma}.
\end{rema}

\begin{rema}
    As we will see, the main difficulty for deducing an upper bound is that, there may exist an exceptional divisor lying over $\lcs(X)$ but with discrepancy $>-1$. It is hard to distinguish this kind of exceptional divisors with the exceptional divisors whose discrepancy $=-1$ from a metric viewpoint.
    
    The condition $(1)$ in the above two theorems rules out this case. In dimension $2$, this condition is always satisfied by taking the minimal resolution. For higher dimensional case, some properties of varieties satisfying this condition is researched in \cite{benoit}.
\end{rema}

\subsection*{Strategy of proof}
We briefly explain the main ideas of the proof. Let
$$\pi:Y\to X$$
be a log resolution. Then constructing the singular K\"ahler-Einstein metric on $X$ boils down to solving the following degenerate Monge-Amp\`ere equation on $Y$:
\begin{equation}\label{MA0}
(\pi^*\omega_X+dd^c\psi)^n
=
e^{\psi}
\prod_i |s_i|_{h_i}^{2a_i}\,dV_Y,
\end{equation}
where $K_Y=\pi^*K_X+\sum_i a_iE_i,$ $s_i$ is a section cutting out $E_i$, and $|\cdot|_{h_i}$ is a Hermitian metric on $\cO_Y(E_i)$. Once we get a solution $\psi$ on $Y$, the $\pi^*\omega_X$-pshness of $\psi$ will ensure that it descends to an $\omega_X$-psh function $\vphi_{KE}$. 

The proof of the lower bound estimate in Theorem \ref{thma} is based on constructing explicit
sub-solutions. The idea comes from \cite{DGG23familiesKE}. More
precisely, under the notation of \hyperref[setup]{Set-up}, we consider functions of the form
$$
\psi_{k,\epsilon}
=
-(n+\nu_X)\sum_{j=1}^k \log^{(j)}(-\log\rho)
-\epsilon \log^{(k)}(-\log\rho).
$$
The proof is divided into three main steps.

\begin{itemize}
    \item First, one proves that $\pi^*\psi_{k,\epsilon}$
are \(\pi^*\omega_X\)-plurisubharmonic after rescaling the 
Hermitian metric $|\cdot|_h$ on $L$ if necessary. Since it is pulled back from $X$, on which $\omega_X$ is a K\"ahler form, this is not difficult (see Lemma \ref{finiteE of psi_k}).
    \item Secondly, one checks that $\pi^*\psi_{k,\epsilon}$ has finite Monge--Ampère energy. This allows us to work inside the finite-energy class, where the \hyperref[comparison]{comparison principle} is available. 
    \item Thirdly, a refined integrability estimate shows that the density obtained from the candidate
    sub-solution satisfies Kołodziej's condition (see Section \ref{density}). Solving an auxiliary Monge--Ampère equation then produces a bounded correction term $u$, such that 
    $\pi^*\psi_{k,\varepsilon}+u$
    is a genuine sub-solution of the Kähler--Einstein equation (\ref{MA0}).
The comparison principle then gives
\[
\pi^*\varphi_{\mathrm{KE}}\geq \psi_{k,\varepsilon}+u,
\]
which proves Theorem \ref{thma}.
\end{itemize}

For Theorem \ref{thmb}, under the
additional assumption that all exceptional divisors have
discrepancy $-1$, we construct an explicit super-solution of the form
$$
-(n-d+1)\log(-\log\rho)+C,
$$
where \(d=\dim X_{\mathrm{sing}}\). The key point is to estimate the
Monge--Ampère measure of $-\log(-\log\rho)$ on the log resolution.  Since $\r$ cuts out the non-klt locus, this function is singular along $E_i$'s that lies above $\lcs(X)$. The assumption $(2)$ in Theorem \ref{thmb} assures that one can write locally
$$\log \pi^*\rho = \sum_{i=1}^p b_i\log|z_i|^2+O(1),$$
where $\cup_{i=1}^p(z_i=0)$ lies above $\lcs(X)=X_{sing}$. The form \(dd^c\log(-\log\pi^*\rho)\) has two types of terms: one involving the curvature term
\(dd^c\log\pi^*\rho\), and one involving
$$\frac{d\log\pi^*\rho\wedge d^c\log\pi^*\rho}
{(-\log\pi^*\rho)^2}.$$
The latter has rank one, which simplifies the expansion of the Monge-Amp\`ere measure, and it has poles of the form $\frac{1}{|z_i|^2}$ near generic points of $(z_i=0),\ 1\leq i\leq p$. Our assumption $(1)$ in Theorem \ref{thmb} assures that this pole can be controlled by the poles in the right hand side of the  K\"ahler-Einstein equation (\ref{MA0}).  The remaining estimates reduce to showing that certain mixed terms
involving \(\pi^*\omega_X\) vanish to sufficient order along the exceptional
divisors. This is where the dimension \(d=\dim X_{\mathrm{sing}}\) enters.

This proves that the above logarithmic function is a super-solution, and
the comparison principle yields the desired upper bound. 

Finally, when the
resolution is obtained by a sequence of blow-ups along smooth centers, we will be above to construct suitable $\pi^*\omega_X$-psh functions directly on $Y$. By taking the combinatorial structure of exceptional divisors into consideration, we will be able to deduce the two-sided estimate of Theorem \ref{thmc}.

\subsection*{Acknowledgements}

The author would like to thank his PhD advisor, Henri Guenancia, for many useful discussions and valuable suggestions on earlier drafts, and for his patient guidance and encouragement. The author is also grateful to his co-advisor, Yuxin Ge, for his constant support and encouragement. Finally, the author would like to thank Shengxuan Zhou and Junyu Meng for many helpful discussions.

\section{Preliminaries}
    In this section, we collect some essential materials for understanding the singular K\"ahler-Einstein metrics. $X$ will denote a compact K\"ahler manifold of dimension $n$ unless otherwise specified. Recall that for a smooth closed real $(1,1)$-form $\theta$, a $\theta$-psh function is a quasi-psh function $\vphi$ (i.e. locally the sum of a smooth function and a psh function) such that $\theta+dd^c\vphi\geq0$ in the sense of current. The set of $\theta$-psh functions is denoted by $\psh(X,\theta)$.

\subsection{Non-pluripolar product.}
    In \cite{BEGZ_2010}, the authors define a non-pluripolar product which sends a $p$-tuple of any closed positive $(1,1)$-currents $(T_1,\dots,T_p)$ to a closed positive $(p,p)$-current $\la T_1\wedge\dots\wedge T_n \ra$ on any complex manifold (not necessarily compact K\"ahler). They proved that this product is always well defined on a compact K\"ahler manifold (cf.\cite[Prop.1.6]{BEGZ_2010}). In particular, the non-pluripolar product $T\to\la T^n\ra$ gives a well-defined measure which puts no mass on a pluripolar set. 
    
    Given a smooth $(1,1)$ form $\theta$ and a $\theta$-psh function $\vphi$, we define the non-pluripolar Monge-Amp\`ere measure of $\vphi$ as $\MA(\vphi):=\la(\theta+dd^c\vphi)^n\ra$. We shall mention that, if $\vphi$ is locally bounded outside a (complete) pluripolar subset $A\subset X$, then $\la(\theta+dd^c\vphi)^n\ra$ is well-defined if and only if the Bedford–Taylor product $(\theta+dd^c\vphi)^n$, which is deﬁned on the open subset $X\setminus A$, has locally ﬁnite mass near each point of A. In this case, $\MA(\vphi)=\la(\theta+dd^c\vphi)^n\ra$ is just the trivial extension of $(\theta+dd^c\vphi)^n$ to $X$ (see \cite[p. 204]{BEGZ_2010}).

\subsection{Big cohomology class and singularity type of currents.}
    A \textit{K\"ahler current} is a positive closed $(1,1)$-current that dominates a smooth K\"ahler form. We say that a cohomology class $\alpha\in H^{1,1}(X,\bR)$ is \textit{big} if it can be represented by a K\"ahler current; it is \textit{pseudo-effective} (psef for short) if it can be represented by a positive current.

    Fix a smooth form $\theta\in\alpha$, if $T_1,T_2$ are two closed positive currents in $\alpha$, we can write $T_i=\theta+dd^c\vphi_i$. We say that $T_1$ is less singular than $T_2$ if their global potentials satisfy $\vphi_1 \geq \vphi_2 + O(1)$. This definition is clearly independent of the choice of $\theta$ and the potentials, hence is well defined. A closed positive current $T_{min}$ in $\alpha$ is said to have \textit{minimal singularities} if it is less singular than any other positive current in $\alpha$. A $\theta$-psh function $\vphi$ is said to have minimal singularities if $\theta+dd^c\vphi$ is so. Note that this current is not unique in general. One way to construct such a current is to define the upper envelop:
    $$V_\theta:=\sup\{\vphi\in\psh(X,\theta) : \vphi\leq0\ \text{on}\ X\}.$$
    This is a well defined $\theta$-psh function once $\a$ is psef. It is clear that $\theta+dd^cV_\theta$ has minimal singularities.

    A positive current $T=\theta+dd^c\vphi$ is said to have \textit{analytic singularities} if locally on $X$ we have 
    $$\vphi=\frac{c}{2}\log\sum_{i=1}^N|f_i|^2+u,$$
    where $u$ is a smooth function and $f_i$'s are holomorphic functions.

\subsection{Currents of full Monge-Amp\`ere mass.}
    Given a smooth form $\theta$ in a big cohomology class $\alpha$, the volume $\vol(\alpha)$ of $\alpha$, introduced in \cite{boucksom2002volume}, satisfies following inequality \cite[Prop.1.20]{BEGZ_2010}:
    $$\int_X\MA(\vphi) \leq \vol(\a),$$
    where $\vphi$ is any $\theta$-psh function. The functions such that the equality holds are said to have \textit{full Monge-Amp\`ere mass}. These functions are also defined as the \textit{finite energy class}, denoted by $\cE(X,\theta)$. This class appears naturally in pluripotential theory, it can also be characterized by weighted Monge-Amp\`ere energy functionals or be viewed as the maximal domain on which $\MA(\vphi)$ can be defined as a measure which does not charge any pluripolar sets, for more details refer to \cite[\S 2]{BBGZ_2013}, \cite[\S 2]{BEGZ_2010} or \cite{GZweightedMA}. What is important to us is that the following comparison principle holds \cite[Coro.2.3]{BEGZ_2010}:
    \begin{prop}[Comparison Principle]\label{comparison}
        For $\vphi,\psi\in\cE(X,\theta)$, we  have
        $$\int_{\{\vphi<\psi\}} \MA(\psi)\leq \int_{\{\vphi<\psi\}} \MA(\vphi).$$
    \end{prop}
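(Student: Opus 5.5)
The plan is to follow the proof in \cite{BEGZ_2010}, which reduces the statement to the classical Bedford--Taylor comparison principle for bounded potentials through truncation and monotone continuity of non-pluripolar products. Given $\vphi,\psi\in\cE(X,\theta)$, let $V_\theta$ be the envelope with minimal singularities. For $j\ge1$ I would form the truncations
$$\vphi_j:=\max(\vphi,V_\theta-j),\qquad \psi_j:=\max(\psi,V_\theta-j).$$
These are $\theta$-psh with minimal singularities, and they decrease to $\vphi$ and $\psi$ respectively.

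\textbf{Step 1 (minimal singularities).} I would first prove the inequality when both functions have minimal singularities. Such functions are locally bounded on the ample locus $\Omega$ of $\alpha$, a Zariski open set whose complement is pluripolar. On $\Omega$, Bedford--Taylor theory gives the maximum principle
$$\mathbf 1_{\{\vphi<\psi\}}\MA(\max(\vphi,\psi))=\mathbf 1_{\{\vphi<\psi\}}\MA(\psi)$$
on the open-type plurifine set. Since non-pluripolar measures put no mass on $X\setminus\Omega$, this identity holds on $X$. Both functions also have full mass $\vol(\alpha)$, since the mass depends only on the singularity type. Writing $u=\max(\vphi,\psi)$, the total masses satisfy
$$\vol(\alpha)=\int_X\MA(u)\ \ge\ \int_{\{\vphi<\psi\}}\MA(\psi)+\int_{\{\vphi\ge\psi\}}\MA(u).$$
On $\{\vphi>\psi\}$ we have $\MA(u)=\MA(\vphi)$. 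Comparing with $\vol(\alpha)=\int_X\MA(\vphi)$ yields the inequality, up to the boundary set $\{\vphi=\psi\}$. That set is handled by the standard device of replacing $\psi$ by $\psi-\delta$ and letting $\delta\downarrow0$, using $\{\vphi<\psi-\delta\}\uparrow\{\vphi<\psi\}$.

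\textbf{Step 2 (general full-mass case).} Next I would apply Step 1 to $\vphi_j$ and $\psi$-type truncations, arranged so that the relevant sets increase. Concretely, I would use $\{\vphi_j<\psi_j\}$, noting that $\{\vphi<\psi\}\cap\{\psi>V_\theta-j\}\subset\{\vphi_j<\psi_j\}$. The limit $j\to\infty$ then requires the key property of the class $\cE$: for $\vphi\in\cE$ one has
$$\mathbf 1_{\{\vphi>V_\theta-j\}}\MA(\vphi_j)=\mathbf 1_{\{\vphi>V_\theta-j\}}\MA(\vphi),$$
again by plurifine locality. In addition, $\MA(\vphi_j)(\{\vphi\le V_\theta-j\})\to0$, precisely because $\vphi$ has full mass. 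Using these two facts, the integrals over $\{\vphi_j<\psi_j\}$ converge to those over $\{\vphi<\psi\}$ up to errors bounded by the masses on $\{\vphi\le V_\theta-j\}$ and $\{\psi\le V_\theta-j\}$, which tend to $0$.

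The main obstacle is this limit step. One has to control the mass of $\MA(\vphi_j)$ and $\MA(\psi_j)$ on the sets where truncation is active, and this is exactly where full Monge--Amp\`ere mass is used: without it the mass escaping to the pluripolar locus would break the inequality. I would first check carefully the plurifine locality statement for non-pluripolar products, following \cite[\S1]{BEGZ_2010}, since everything rests on it.
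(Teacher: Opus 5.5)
The paper does not prove this statement; it simply quotes \cite[Coro.~2.3]{BEGZ\_2010}. Your argument is a correct reconstruction of that standard proof: the plurifine maximum principle for minimal-singularity potentials on the ample locus, followed by passage to the limit in the truncations $\max(\cdot,V_\theta-j)$ using plurifine locality of $\MA$ and full mass. Two points can be tightened: the $\psi-\delta$ device works because $\MA(\varphi)$ charges at most countably many of the (modulo a pluripolar set) disjoint sets $\{\varphi=\psi-\delta\}$, and in Step 2 one actually has $\{\varphi_j<\psi_j\}=\{\varphi<\psi\}\cap\{\psi>V_\theta-j\}$, so the left-hand side converges monotonically to $\int_{\{\varphi<\psi\}}\MA(\psi)$ with no error term.
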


    Now, for simplicity we assume $\theta$ is semi-positive and big. In this case, $V_\theta\equiv0$, and a $\theta$-psh function has minimal singularities if and only if it is bounded. To check whether a $\theta$-psh function has finite energy is generally not an easy thing. However, for an important subset $\cE^1(X,\theta)\subset \cE(X,\theta)$ we do have a useful criterion. We now recall the definition of this subset. First, if $\vphi\in\psh(X,\theta)\cap L^\infty(X)$, we set
    $$E(\vphi):= \frac{1}{(n+1)\vol(\a)}\sum_{j=0}^n\int_X\vphi\ \la(\theta+dd^c\vphi)^j \wedge \theta^{n-j}\ra,$$
    and for any $\theta$-psh function $\vphi$, we define
    $$E(\vphi):=\inf\left\{ E(\psi) : \psi\in\psh(X,\theta)\cap L^\infty(X),\ \psi\geq\vphi \right\}\in[-\infty,+\infty).$$
    Then we set
    $$\cE^1(X,\theta)=\left\{ \vphi\in\psh(X,\theta) : E(\vphi)>-\infty \right\}.$$
    \cite[Proposition 2.11]{BEGZ_2010} shows that: for a $\theta$-psh function $\vphi\in\psh(X,\theta)$, 
    $$\vphi\in\cE^1(X,\theta)\Longleftrightarrow\vphi\in\cE(X,\theta)\text{ and }\int_X\vphi\MA(\vphi)>-\infty,$$
    Then it is clear that $\cE^1(X,\theta)\subset\cE(X,\theta)$, as we mentioned before.
    
    Now given a Borel subset $K\subset X$, the \textit{capacity} of $K$ is defined to be
    $$\Capa_\theta(K)=\sup \left\{\int_K\MA(\vphi) :  \vphi\in\mathrm{PSH}(X,\theta),\ 0\leq\vphi\leq1\right\}.$$
    To check that if $\vphi$ is in $\cE^1$ or not, it suffices to compute the capacity decay of sublevel sets \cite[Lemma 2.9]{BBGZ_2013}:
    \begin{lemm}\label{cap decay}
        Let $\vphi\in\mathrm{PSH}(X,\theta)$. If
        $$\int_{t=0}^{+\infty}t^n\Capa_\theta(\vphi<-t)<+\infty,$$
        then $\vphi\in\cE^1(X,\theta)$.
    \end{lemm}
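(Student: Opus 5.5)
The plan is to reduce to bounded approximants and control their energy uniformly through a capacity estimate on shells $\{-3t<\vphi\le -2t\}$. The pointwise inequality comes from rescaling, and the summation over shells comes from the hypothesis.

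\textbf{Step 1: reduction to bounded approximants.} Since $\theta$ is semi-positive, constants are $\theta$-psh, so adding a constant to $\vphi$ changes neither the hypothesis nor the conclusion (it only shifts $t$). We may therefore normalize $\sup_X\vphi\le 0$. Set $\vphi_j:=\max(\vphi,-j)$. These are bounded $\theta$-psh functions that decrease to $\vphi$, and $\{\vphi_j<-t\}\subset\{\vphi<-t\}$, so $\Capa_\theta(\vphi_j<-t)\le \Capa_\theta(\vphi<-t)$ for every $t$.

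By definition, and monotonicity of $E$ along decreasing sequences, $E(\vphi)=\lim_j E(\vphi_j)$. It therefore suffices to bound $E(\vphi_j)$ from below uniformly in $j$. For a bounded $\theta$-psh $\psi\le 0$, integration by parts gives
$$\int\psi\,\la\theta_\psi^{k}\wedge\theta^{n-k}\ra\ \ge\ \int\psi\,\la\theta_\psi^{k+1}\wedge\theta^{n-k-1}\ra.$$
Indeed, the difference of the two sides is $\int d\psi\wedge d^c\psi\wedge\dots\ge0$. Hence
$$E(\psi)\ \ge\ \vol(\a)^{-1}\int_X\psi\,\MA(\psi),$$
and it is enough to bound $\int_X(-\vphi_j)\,\MA(\vphi_j)$ uniformly in $j$.

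\textbf{Step 2: the key capacity estimate.} Let $\psi\le0$ be bounded $\theta$-psh and $t\ge1$. Because $\theta\ge0$, the function $\psi/t$ is $\theta$-psh, since
$$\theta+dd^c(\psi/t)=(1-t^{-1})\theta+t^{-1}\theta_\psi\ge0.$$
Moreover $\theta_\psi=t\,\theta_{\psi/t}-(t-1)\theta\le t\,\theta_{\psi/t}$ as positive forms. Expanding the $n$-th power of this inequality of positive $(1,1)$-forms gives $\MA(\psi)\le t^n\MA(\psi/t)$ as measures.

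Next put $w:=\max(\psi/t,-3)+3$, so that $0\le w\le 3$ and $w/3$ is a competitor in the definition of $\Capa_\theta$, with $\MA(w/3)=3^{-n}\MA(w)$. On the plurifine open set $\{\psi/t>-3\}$, locality of the Monge--Ampère operator gives $\MA(w)=\MA(\psi/t)$. Therefore
$$\MA(\psi)\big(\{-3t<\psi\le-2t\}\big)\le t^n\MA(w)\big(\{\psi<-2t+\tfrac{t}{2}\}\cap\{\psi>-3t\}\big)\le 3^nt^n\,\Capa_\theta(\psi<-t).$$
Here $-2t+\tfrac{t}{2}\le -t$ for $t\ge1$, so the first set is contained in $\{\psi<-t\}$. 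Only the inclusion $\{-3t<\psi\le-2t\}\subset\{\psi<-t\}$ is actually needed.

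\textbf{Step 3: summation over shells.} Cover $\{\psi\le-2\}$ by the shells $\{-3t_k<\psi\le -2t_k\}$ with $t_k=(3/2)^k$, $k\ge1$. On the $k$-th shell $-\psi\le 3t_k$. Using Step 2,
$$\int_X(-\psi)\MA(\psi)\le 2\vol(\a)+\sum_{k\ge1}3^{n+1}t_k^{n+1}\Capa_\theta(\psi<-t_k).$$
Since $t\mapsto\Capa_\theta(\psi<-t)$ is non-increasing, the sum is comparable to
$$C_n\int_1^\infty t^n\,\Capa_\theta(\psi<-t)\,dt.$$
Applying this with $\psi=\vphi_j$ and using $\Capa_\theta(\vphi_j<-t)\le\Capa_\theta(\vphi<-t)$ gives a bound independent of $j$. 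Combined with Step 1, this yields $E(\vphi)>-\infty$, i.e. $\vphi\in\cE^1(X,\theta)$.

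\textbf{Main obstacle.} The delicate step is Step 2. It requires passing from the Monge--Ampère mass of $\psi$ to a capacity, which depends on two facts: the rescaling inequality $\MA(\psi)\le t^n\MA(\psi/t)$, which crucially uses semi-positivity of $\theta$, and plurifine locality of $\MA$ for bounded functions, needed to identify $\MA(w)$ with $\MA(\psi/t)$ on the shell.
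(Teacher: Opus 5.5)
Your argument is correct and is essentially the standard proof behind \cite[Lemma 2.9]{BBGZ_2013}, which the paper cites rather than proves: bound the Monge--Amp\`ere mass of sublevel sets of the bounded approximants $\max(\vphi,-j)$ by $t^n\Capa_\theta$ using the rescaling $\psi\mapsto\psi/t$ (available since $\theta\geq 0$), then sum over levels to get a uniform bound on $\int_X(-\vphi_j)\MA(\vphi_j)$, hence on $E(\vphi_j)$. There are two harmless slips: $\MA(w/3)=3^{-n}\MA(w)$ is false because $\theta$ is not rescaled, but your own rescaling argument gives $\MA(w/3)\geq 3^{-n}\MA(w)$, which is the direction you use; and the shells with $k\geq 1$ only cover $\{\psi\leq -3\}$, so start at $k=0$ (allowed, as $t_0=1$) or replace $2\vol(\a)$ by $3\vol(\a)$.
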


\subsection{Condition K}
    Due to the breakthrough work of Kołodziej \cite{kolodziej1998complex}, we introduce the following notion (see also \cite[Section 1.3]{henriDiameter}):
    \begin{defi}[Condition (K)]\label{Condition_K}
        We say that a function $w:[0,\infty)\to[0,\infty)$ satisfies \hyperref[Condition_K]{Condition (K)} if it is convex increasing, and there is an increasing function $h:[0,\infty)\to[0,\infty)$ satisfying
        $$\int^\infty\frac{1}{h(t)}dt<\infty,$$
        such that $w(t)=t \cdot (\log(1+t))^n \cdot h^n\circ(\log(1+\log(1+t)))$.
    \end{defi}
        \begin{itemize}
            \item if $h(t)=(e^{e^t-1}-1)^{\epsilon/n}\cdot(e^t-1)^{-1}$, then $w(t)=t^{1+\epsilon};$
            \item if $h(t)=(e^{t}-1)^{\epsilon/n}$, then $w(t)=t\cdot(\log(1+t))^{n+\epsilon}$;
            \item if $h(t)=t^{1+\epsilon/n}$, then $w(t)=t\cdot(\log(1+t))^n\cdot(\log(1+\log (1+t)))^{n+\epsilon}$.
        \end{itemize}
    
    The following theorem is proved in \cite[Theorem 2.5.2]{kolodziej1998complex} (together with theorems for semi-positive and big form developed later in \cite{GZ05cap}, \cite{GZweightedMA},\cite{GZdegenerateCMA}).
    
    \begin{theo}
        Let $X$ be a compact K\"ahler manifold of dimension $n$. Let $\theta$ be a semi-positive and big $(1,1)$-form, and let $dV_X$ be a a smooth volume form on $X$. Suppose that $w$ is a function satisfying \hyperref[Condition_K]{Condition (K)}.
        Then for any non-negative $f\in L^1(X)$ such that
        \begin{equation}\label{Condition on density}
            \int_Xw\circ f\ dV_X<\infty,
        \end{equation}
        the Monge-Amp\`ere equation $(\theta+dd^c\vphi)^n=fdV_X$ has a unique bounded solution $\vphi\in \psh(X,\theta)\cap L^\infty(X)$ normalised by $\sup_X\vphi=0$.
    \end{theo}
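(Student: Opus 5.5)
This theorem is Kołodziej's classical result, extended to semi-positive and big forms. The plan is the standard capacity method: an a priori $L^\infty$ bound, followed by approximation and stability.

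\emph{Step 1 (existence in the finite-energy class).} Since $f\in L^1$ and $\int_X f\,dV_X$ must equal $\vol(\{\theta\})$ (normalise $f$ so that it does), the variational method of \cite{BBGZ_2013}, or the weighted approach of \cite{GZweightedMA}, produces a solution $\vphi\in\cE(X,\theta)$ with $\MA(\vphi)=f\,dV_X$. Uniqueness up to constants follows from the \hyperref[comparison]{Comparison Principle} together with the usual domination argument. It remains to show $\vphi$ is bounded.

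\emph{Step 2 (domination of the measure by capacity).} For any Borel set $K$, combine H\"older--Orlicz duality for the convex function $w$ with the exponential integrability of $\theta$-psh functions. The latter states that $\int e^{-\alpha\psi}dV_X$ is uniformly bounded for normalised $\psi$, and it yields $\vol(K)\le C\exp(-c\,\Capa_\theta(K)^{-1/n})$. Together these give
$$\int_K f\,dV_X\le \Capa_\theta(K)\,F\big(\Capa_\theta(K)\big)$$
with $F$ increasing. The particular shape $w(t)=t(\log(1+t))^n h^n(\log(1+\log(1+t)))$ is designed so that $F(x)=C/h^n(\log(-\log x))$ up to harmless changes. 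Concretely, the Young-type splitting $\int_K f\le \int_{K\cap\{f>s\}}f+s\vol(K)$, with $s$ chosen in terms of $\Capa_\theta(K)$, gives this.

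\emph{Step 3 (Kołodziej's iteration).} For $\vphi$ as above with $\sup\vphi=0$, set $g(s)=\Capa_\theta(\vphi<-s)^{1/n}$. The comparison principle gives, for $0<t<1$,
$$t^n\Capa_\theta(\vphi<-s-t)\le\int_{\{\vphi<-s\}}f\,dV_X.$$
Combining this with Step 2 gives $t\,g(s+t)\le g(s)\,F(g(s)^n)^{1/n}$. Kołodziej's lemma on such functional inequalities then shows $g$ vanishes beyond a finite $s_\infty$ depending only on $\int w\circ f$, provided $\int^\infty dt/h(t)<\infty$. Hence $\vphi\ge -s_\infty$, since sets of zero capacity are pluripolar and a sublevel set of a quasi-psh function that is pluripolar must be empty.

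\emph{Main obstacle.} The hard step is making Steps 2–3 work for $\theta$ semi-positive and big rather than Kähler. Capacity must be taken with respect to $\theta$, and the volume–capacity comparison $\vol(K)\lesssim\exp(-c\Capa_\theta^{-1/n})$ must be established uniformly. My first attempt would follow \cite{GZ05cap,GZdegenerateCMA}: fix a Kähler form $\omega\ge\theta$, use the inequality $\Capa_\omega\le C\,\Capa_\theta^{a}$ on compact sets, and use uniform Skoda integrability on $\psh(X,\theta)\subset\psh(X,\omega)$. The resulting loss of a power only rescales constants inside $h$, and Condition (K) absorbs this.
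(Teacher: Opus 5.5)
The paper gives no proof of this statement: it quotes Ko{\l}odziej's theorem together with its extension to semi-positive big forms by Guedj--Zeriahi. Your capacity-method outline is the right framework. However, the step you flag as the main obstacle is resolved incorrectly, and for Orlicz densities it is exactly the step that matters.

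Condition (K) is calibrated against the volume--capacity inequality $\vol(K)\le C\exp(-\alpha\,\Capa_\theta(K)^{-1/n})$ with precisely the exponent $1/n$. In your splitting, take $\log s\simeq\frac{\alpha}{2}\Capa_\theta(K)^{-1/n}$. Then the factor $(\log(1+s))^n$ in $w(s)/s$ produces exactly one power of $\Capa_\theta(K)$, and $h^n(\log(1+\log(1+s)))$ produces $h^n\big(\tfrac1n\log\tfrac{1}{\Capa_\theta(K)}+O(1)\big)$. So the correct function is $F(x)=C/h^n(\tfrac1n\log\tfrac1x+O(1))$. Your $F(x)=C/h^n(\log(-\log x))$ is off by one logarithm. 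With it, Ko{\l}odziej's summability condition $\sum_j F(2^{-jn}g(0)^n)^{1/n}<\infty$ would amount to $\int^\infty e^u\,du/h(u)<\infty$, which Condition (K) does not give.

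More seriously, if you go through $\Capa_\omega\le C\,\Capa_\theta^{a}$, you only get $\vol(K)\le C\exp(-\alpha'\Capa_\theta(K)^{-a/n})$. The same computation then gives $\int_K f\,dV_X\lesssim \Capa_\theta(K)^{a}/h^n(\cdots)$, so the loss lands in the prefactor, not inside $h$. Since the comparison principle produces $t^n\Capa_\theta(\vphi<-s-t)$, the iteration becomes $t\,g(s+t)\lesssim g(s)^{a}/h(\cdots)$, from which nothing follows when $a<1$ (e.g.\ $h(t)=t^2$).

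And $a<1$ cannot be avoided. Let $\pi:X\to Z$ be the blow-up of a point $z$ of a compact K\"ahler manifold $(Z,\omega_Z)$, with $\theta=\pi^*\omega_Z$ and $K=\pi^{-1}(\bar B(z,r))$. Then $\Capa_\theta(K)=\Capa_{\omega_Z}(\bar B(z,r))\approx(\log\frac1r)^{-n}$. On the other hand, $K$ contains a tube of radius $\simeq r$ around the exceptional divisor, so $\Capa_\omega(K)\gtrsim(\log\frac1r)^{-1}$, which forces $a\le 1/n$. Such a loss is harmless for $L^p$ densities, where the bound is super-polynomial in the capacity, but not in the borderline class (K).

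What is missing is the intrinsic volume--capacity inequality, obtained without passing through $\Capa_\omega$. Let $V^*_{K,\theta}$ be the usc regularization of $\sup\{u\in\psh(X,\theta): u\le 0 \text{ on } K\}$. Because $\theta\ge 0$, it satisfies $0\le V^*_{K,\theta}\le M:=\sup_X V^*_{K,\theta}$. So for $M\ge 1$, the function $V^*_{K,\theta}/M$ is a competitor in the definition of $\Capa_\theta(K)$. Since $\MA(V^*_{K,\theta})$ is carried by $K$ and has mass $\int_X\theta^n>0$, this gives $\Capa_\theta(K)\ge M^{-n}\int_X\theta^n$. Next, apply uniform Skoda integrability on $\psh(X,\theta)\subset\psh(X,\omega)$ to $V^*_{K,\theta}$, which is $\le 0$ almost everywhere on $K$. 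This gives $\vol(K)\le Ce^{-\alpha M}$, hence $\vol(K)\le C\exp(-\alpha'\Capa_\theta(K)^{-1/n})$. With this and the corrected $F$, your Steps 2 and 3 go through.

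A minor point: uniqueness of the bounded solution of $\MA(\vphi)=f\,dV_X$ does not follow from the comparison principle alone. It is Dinew's theorem, extended to big classes in \cite{BEGZ_2010}. Only for the twisted equation of Corollary \ref{solve KEMA} is the comparison principle enough.
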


    As a corollary, we have
    \begin{coro}\label{solve KEMA}
        Under the same condition as above, the Monge-Am\`ere equation
        $(\theta+dd^c\vphi)^n=e^\vphi fdV_X$ has a unique bounded solution $\vphi\in \psh(X,\theta)\cap L^\infty(X)$.
    \end{coro}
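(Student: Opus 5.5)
The idea is to reduce the equation $(\theta+dd^c\vphi)^n=e^\vphi f\,dV_X$ to the previous theorem through a fixed-point argument. The theorem handles densities that do not depend on the unknown, so we need a way to absorb the factor $e^\vphi$. Uniqueness will follow from the comparison principle.

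\emph{A priori bounds.} Let $\psi_0$ be the bounded solution of $(\theta+dd^c\psi_0)^n=c f\,dV_X$ with $\sup_X\psi_0=0$, given by the previous theorem. Here $c=\vol(\theta)/\int_X f\,dV_X$, and I assume $f\not\equiv 0$, so that $\int_X f\,dV_X>0$ and the total masses match. Since $\psi_0$ is bounded, the functions $\psi_0+\log c-\|\psi_0\|_\infty$ and $\psi_0+\log c$ are a subsolution and a supersolution of the equation.

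\emph{Existence.} I would run a Schauder-type or monotone iteration. For bounded $u$, the density $g_u=e^{u}f$ satisfies $\int_X w\circ g_u\,dV_X<\infty$. This is because $w$ is convex and increasing with $w(0)=0$ (which I assume is part of Condition (K)), so $w(\lambda t)\le\lambda w(t)$ for $\lambda\le1$. Taking $\lambda=e^{u-\sup u}$ reduces the question to a constant multiple of $f$. For a large constant $A$, the growth $w(At)\le C_A w(t)+C_A'$ holds for all the explicit forms $t(\log(1+t))^n h^n(\cdots)$, at least when $h$ is doubling-like. If this growth fails for a general $h$, I would replace $w$ by a smaller Condition (K) function.

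Cleaner approach via variational / normalization trick: solve instead $(\theta+dd^c\phi)^n = e^{c}f\,dV$ family... The standard route is the following. Define a map $T$ on the set $\mathcal{K}$ of $\theta$-psh functions lying between the sub- and supersolution. For $u\in\mathcal{K}$, let $T(u)$ be the unique bounded solution of
$$(\theta+dd^c\phi)^n=e^{u}f\,dV_X\cdot\frac{\vol(\theta)}{\int_X e^{u}f\,dV_X},$$
with its additive constant fixed by $\int_X e^{\phi}f\,dV_X=\vol(\theta)$. Kołodziej's uniform $L^\infty$ estimate, which depends only on $\int_X w\circ g\,dV_X$, keeps $T(\mathcal{K})$ uniformly bounded. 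Stability estimates for bounded solutions give continuity of $T$ in $L^1$, and compactness of normalized $\theta$-psh functions lets Schauder's theorem produce a fixed point $\vphi$. At the fixed point the normalization gives $\vphi=T(\vphi)$ with constant factor $1$, so $\vphi$ solves the equation.

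A simpler alternative, which I would try first, is the Berman–Boucksom–Guedj–Zeriahi variational method. Maximize $E(\phi)-\log\int_X e^{\phi}f\,dV_X$ over $\cE^1(X,\theta)$; the maximizer solves the equation up to a constant, and the constant can be absorbed. The integrability condition on $f$ then gives boundedness by the previous theorem applied with density $e^{\vphi}f$. This step is valid because $e^\vphi\le e^{\sup\vphi}$ and, by the convexity remark above, $\int_X w(e^\vphi f)\,dV_X<\infty$.

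\emph{Uniqueness.} Let $\vphi_1,\vphi_2$ be bounded solutions. On $\{\vphi_1<\vphi_2\}$ we have $\MA(\vphi_2)=e^{\vphi_2}f\,dV_X\ge e^{\vphi_1}f\,dV_X=\MA(\vphi_1)$. Proposition \ref{comparison} gives the reverse inequality for the integrals over this set, so $\int_{\{\vphi_1<\vphi_2\}}(e^{\vphi_2}-e^{\vphi_1})f\,dV_X=0$. Hence the set $\{\vphi_1<\vphi_2\}$ has zero $\MA(\vphi_1)$-measure, and the domination principle then yields $\vphi_1\ge\vphi_2$. By symmetry $\vphi_1=\vphi_2$.

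The main obstacle is the existence step: the compactness and continuity for the fixed point, or the properness of the functional in the variational approach. In either case it rests on Kołodziej's uniform a priori bound.
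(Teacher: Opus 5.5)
The paper gives no argument for this corollary beyond deriving it from the preceding Ko{\l}odziej-type theorem. The standard derivation is the fixed-point argument of Eyssidieux--Guedj--Zeriahi, which is essentially your operator $T$. Your uniqueness argument (comparison principle, then the domination principle in $\cE(X,\theta)$) is fine, but the existence step has a genuine gap.

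First, there is a sign slip. Since $\psi_0\le0$, one has $\MA(\psi_0+\log c)=cf\,dV_X\ge e^{\psi_0+\log c}f\,dV_X$, so $\psi_0+\log c$ is a \emph{sub}solution; a supersolution is $\psi_0+\log c+\|\psi_0\|_\infty$. As you defined it, $\mathcal{K}$ lies below a subsolution. By the same comparison argument you use for uniqueness, every solution satisfies $\vphi\ge\psi_0+\log c$, so $\mathcal{K}$ generally contains no solution.

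More seriously, even with the sign corrected, nothing shows $T(\mathcal{K})\subset\mathcal{K}$. The function $T(u)$ solves an equation whose right-hand side is built from $u$, not from $T(u)$. Sub- and supersolutions of the original equation therefore say nothing about $T(u)$, and a uniform bound on $T(\mathcal{K})$ is not invariance. Enlarging $\mathcal{K}$ to a ball $\{\|u\|_\infty\le M\}$ does not help either, as long as you control the density $\vol(\theta)e^uf/\int_Xe^uf\,dV_X$ only through $\mathrm{osc}_Xu$: the resulting bound on $T(u)$ then grows with $M$.

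The missing idea is a bound on the density that is independent of $u$. The density above is unchanged when a constant is added to $u$, so you may assume $\sup_Xu=0$. Then $e^u\le1$. Moreover, $\int_Xe^uf\,dV_X\ge\delta>0$ uniformly on the $L^1$-compact set $\{u\in\psh(X,\theta):\sup_Xu=0\}$. Indeed, $u\mapsto\int_Xe^uf\,dV_X$ is continuous there by dominated convergence ($e^uf\le f$), and it is positive since $f\not\equiv0$. This gives the following chain:
\begin{itemize}
\item the density is at most $(\vol(\theta)/\delta)f$ for every $u$;
\item Ko{\l}odziej's estimate then bounds $\mathrm{osc}_XT(u)$ by a constant independent of $u$;
\item your normalization $\int_Xe^{T(u)}f\,dV_X=\vol(\theta)$ then gives $\|T(u)\|_\infty\le M_0$;
\item so $T$ maps the convex $L^1$-compact set $\{u\in\psh(X,\theta):\|u\|_\infty\le M_0\}$ into itself.
\end{itemize}
Schauder then applies, with the continuity of $T$ you already invoke, and no sub/supersolutions are needed.

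The constant factor $\vol(\theta)/\delta$ is harmless if you phrase Ko{\l}odziej's hypothesis as a capacity domination $\mu(K)\le F(\Capa(K))$, which survives multiplying $\mu$ by a constant. This also settles your worry about $w(At)\lesssim w(t)$ without changing $w$.

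Your variational alternative can be completed too, but as written it is an outline. It needs the BBGZ existence of a maximizer and its Euler--Lagrange equation. To conclude boundedness, you must also use uniqueness of solutions of $\MA(\psi)=g\,dV_X$ in $\cE(X,\theta)$, not only among bounded functions as in the preceding theorem.
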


\subsection{Log canonical pairs and stable varieties.} 
We will use some terminology coming from the minimal model program (cf.\cite{kollar-mori}). 
By definition, a \textit{pair} $(X,D)$ consists of a complex projective normal variety $X$ and an effective Weil $\bQ$-divisor $D$.
Assume $K_X+D$ is $\bQ$-Cartier and take a log resolution $\pi:X'\to X$, then there are rational numbers $a_i$ such that 
$$K_{X'}=\pi^*(K_X+D)+\sum a_iE_i,$$
here $E_i$'s are either exceptional divisors or components of the strict transform of $D$. The rational number $a_i$ is called the \textit{discrepancy} of $E_i$.

We say that the pair $(X,D)$ is a \textit{log canonical} (\textit{lc} for short) pair if $K_X+D$ is $\bQ$-Cartier and $a_i\geq -1,\ \forall i$; it is a \textit{Kawamata log terminal} (\textit{klt} for short) pair if $K_X+D$ is $\bQ$-Cartier and $a_i> -1,\ \forall i$. These definitions are independent of the choice of $\pi$.

Stable varieties appear naturally when one considers compactifications of the moduli space of canonically polarized smooth varieties. They have semi-log canonical singularities:
    \begin{defi}
        Let $X$ be a projective projective variety, we say that $X$ has \textit{semi-log canonical} (\textit{slc} for short) singularity if
        \begin{itemize}
            \item[1.] $X$ is Gorenstein in codimension $1$ and satisfies Serre's $S_2$ condition;
            \item[2.] $K_X$ is $\bQ$-Cartier;
            \item[3.] Let $\nu :X^\nu\to X$ be the normalization of $X$ and $C_{X^\nu}$ be the conductor divisor on $X^{\nu}$. Then the pair $(X^\nu,C_{X^\nu})$ is a log canonical pair.
        \end{itemize}
    \end{defi}

    If $X$ is slc, then $C_{X^\nu}$ is necessarily reduced, and we have $\nu^*K_X = K_{X^\nu}+C_{X^\nu}$. Intuitively, the codimension $1$ singularities of $X$ are ordinary nodes. For more details about this notion we refer to \cite{kovacs-singstable}.

    \begin{defi}
        We say that $X$ is a \textit{stable variety} if it has slc singularity and the canonical divisor $K_X$ is $\bQ$-ample. 
    \end{defi}

    \begin{defi}\label{lcs}
        Let $X$ be a stable variety, let $\nu:X^\nu\to X$ be its normalization. Take a log resolution $\mu:Y\to X^\nu$ of the lc pair $(X^\nu,C_{X^\nu})$ and let $f=\nu\circ\mu$. Then we have
        $$K_Y = f^*K_X + \sum a_iE_i,$$
        where $a_i\geq-1$. The \textit{non-klt locus} of $X$ is defined as the Zariski closed subset:
        $$\lcs(X)=\bigcup_{\{a_i=-1\}}f(E_i).$$
    \end{defi}
    This definition does not depend on the choice of log resolution $\mu$, and $\lcs(X)$ coincide with the complement of the set of points where $X$ has klt singularities.

\subsection{Singular K\"ahler-Einstein metrics.}

    There are several equivalent approaches to define K\"ahler-Einstein metric on a stable variety, the simplest one is perhaps the following (cf, \cite[Thm.2.10]{BG13stable_var}):
    
    \begin{defi}
        Let $X$ be a stable variety of dimension $n$. A \textit{K\"ahler-Einstein metric} $\omega$ on $X$ is a smooth K\"ahler form on $X_{reg}$ such that:
        \begin{itemize}
            \item $\Ric(\omega)=-\omega$ on $X_{reg}$,
            \item $\int_{X_{reg}}\omega^n=c_1(K_X)^n$.
        \end{itemize}
    \end{defi}

    This metric extends globally to define a current in $c_1(K_X)$. If we take a log resolution $f:Y\to X$, and we write $K_Y = f^*K_X + \sum a_iE_i$, then constructing the K\"aher-Einstein metric boils down to solving the following Monge-Amp\`ere equation on $Y$:
    \begin{equation*}
            (f^*\omega_X + dd^c \varphi)^n=e^{\varphi} \cdot \prod|s_i|^{2a_i} \cdot dV_Y,
    \end{equation*}
    where $\omega_X\in c_1(K_X)$ is a smooth K\"ahler form on $X$, $dV_Y$ is a smooth volume form on $Y$, $s_i$ is a section of $\cO_Y(E_i)$ such that $E_i=(s_i=0)$, and $|\cdot|_{h_i}$ is a Hermitian metric on $\cO_Y(E_i)$.

\section{Integrability of density.}\label{density}

Motivated by the logarithmic structures appearing in Kołodziej’s theory, we introduce the following functions $\chi_k:[0,+\infty)\to[1,+\infty)$ by
\begin{equation}\label{define chi_k}
    \begin{aligned}
    &\chi_k(t)=\left\{\begin{aligned}
        &1+t,\quad k=0\\
        &1+\log(\chi_{k-1}),\quad k>0
    \end{aligned}\right.\\
    &\psi_k=-\chi_k(-\log\r).
\end{aligned}
\end{equation}

\medskip

We note that 
$\chi_k(t) \sim_{t\to\infty} \underbrace{\log\circ\dots\circ\log}_{\text{k times}}\ (t),$
and thus
$$\psi_k\sim -\underbrace{\log\circ\dots\circ\log}_{\text{k times}}\ (-\log\r)$$
near the non-klt locus.

Now let $X$ be a stable variety, we fix a log resolution $\pi:Y\to X$ such that $\nu_X(\pi)=\nu_X$. For simplicity, we will omit the subscript $X$ and write $\nu$ for $\nu_X$. Let $s_i$ be a section of $\cO_Y(E_i)$ such that $E_i=(s_i=0)$. Let $|\cdot|_{h_i}$ be a Hermitian metric on $\cO_Y(E_i)$. Given a positive integer $k\geq 1$ and a small real number $\epsilon>0$, consider the following density function on $Y$:
$$g_{k,\epsilon}:=\exp\left( (n+\nu)\sum_{j=1}^{k}\pi^*\psi_j+\epsilon\pi^*\psi_{k} \right)\cdot \prod_i|s_i|_{h_i}^{2a_i}.$$
we show that $g_{k,\epsilon}$ is integrable with respect to a suitable weight function $w_k$:

\begin{prop}\label{integrable of g_km}
    Let $dV_Y$ be a smooth volume form on $Y$. Let $\eta>0$ be a small number such that $n\eta<\epsilon$. Let 
    \begin{equation*}
        \begin{aligned}
            &h_k(t):=\left\{\begin{aligned}
                &\chi_0\cdot\chi_1\cdots\chi_{k-2}\cdot\chi_{k-2}^{\eta}(t),&k\geq2\\
                &(e^t-1)^\eta,&k=1
            \end{aligned} \right.\\
            &w_k(t):=t\cdot(\log(1+t))^n\cdot h_k^n\circ(\log(1+\log(1+t)))\ .
        \end{aligned}
    \end{equation*}
    Then $w_k$ satisfies \hyperref[Condition_K]{Condition (K)}  (for large $t$), and we have
    $$\int_Yw_k(g_{k,\epsilon})\ dV_Y<\infty.$$
\end{prop}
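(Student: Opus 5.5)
The plan is to verify Condition (K) for $w_k$ directly from the structure of the $\chi_j$, and then to reduce the integral to a local computation near points of $E_{lc}$. That computation becomes a one-variable integral in the "radial" variable $s=-\log\pi^*\r$.

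\textbf{Step 1: Condition (K).} The key identity is
$$\chi_j'(t)=\frac{1}{\chi_0(t)\chi_1(t)\cdots\chi_{j-1}(t)},$$
which follows by induction from $\chi_j=1+\log\chi_{j-1}$. For $k\ge2$ this gives
$$\frac{1}{h_k}=\frac{\chi_{k-2}'}{\chi_{k-2}^{1+\eta}},$$
so that $\int^\infty dt/h_k=\big[-\chi_{k-2}^{-\eta}/\eta\big]<\infty$. The case $k=1$ is immediate. Each $h_k$ is increasing. Convexity of $w_k$ for large $t$ is a routine derivative check: $w_k(t)=t\cdot(\text{slowly varying})$, so $w_k''>0$ for $t\gg1$, and one may modify $w_k$ on a compact interval, which is harmless.

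\textbf{Step 2: asymptotics of the integrand.} Cover $Y$ by finitely many charts. Away from $\pi^{-1}(\lcs(X))$ the function $g_{k,\epsilon}$ is $\prod_i|s_i|^{2a_i}$ times a bounded factor, with all $a_i>-1$. Since $w_k(t)\le C t^{1+\delta}$ for any $\delta>0$, integrability there is clear. Near a point $y$ of $\pi^{-1}(\lcs(X))$, choose coordinates in which $z_1,\dots,z_p$ cut out the components of $E_{lc}$ through $y$, so $p\le\nu$. The remaining $a_i>-1$ factors are handled by Hölder. Set
$$L=-\log\pi^*\r,\qquad t_i=-\log|z_i|^2 .$$
The two facts needed are:
\begin{itemize}
\item[(a)] Since $\r$ vanishes exactly on $\lcs(X)$, $\log\pi^*\r=\sum_i b_i\log|z_i|^2+O(\text{log of klt factors})$ up to bounded terms, with $b_i>0$ on the $E_{lc}$-components. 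Hence $\log g_{k,\epsilon}\approx\sum_{i\le p}t_i\lesssim L$, and $L\gtrsim\sum_{i\le p}t_i$.
\item[(b)] $\chi_j(\chi_1(t))=\chi_{j+1}(t)$ up to bounded multiplicative errors. Moreover $e^{-\chi_j}=e^{-1}\chi_{j-1}^{-1}$.
\end{itemize}
Together these give
$$\exp\Big((n+\nu)\sum_{j=1}^k\psi_j+\epsilon\psi_k\Big)\simeq\prod_{j=0}^{k-1}\chi_j(L)^{-(n+\nu)}\cdot\chi_{k-1}(L)^{-\epsilon}.$$
Inserting this into $w_k$, we have $(\log(1+g))^n\lesssim\chi_0(L)^n$ and $h_k^n(\log(1+\log(1+g)))\lesssim(\chi_1\cdots\chi_{k-1})^n\chi_{k-1}^{n\eta}(L)$. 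Therefore
$$w_k(g_{k,\epsilon})\lesssim\prod_{i\le p}|z_i|^{-2}\cdot\prod_{j=0}^{k-1}\chi_j(L)^{-\nu}\cdot\chi_{k-1}(L)^{n\eta-\epsilon}\cdot(\text{klt factors}).$$

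\textbf{Step 3: the local integral.} In polar coordinates $dV\simeq\prod dt_i\,d\theta$ on the region $t_i\ge T$. Because all $\chi_j$ are increasing and $L\gtrsim\sum t_i$, we may replace $L$ by $s=\sum_{i\le p}t_i$, at the cost of constants coming from (b). Also $p\le\nu$ and $\chi_j\ge1$, so the exponent $\nu$ may be lowered to $p$. Integrating over the simplex $\{\sum t_i=s\}$, whose volume is of order $s^{p-1}$, leaves
$$\int^\infty\frac{s^{p-1}\,ds}{s^{p}\,\chi_1(s)\cdots\chi_{k-1}(s)\,\chi_{k-1}(s)^{\epsilon-n\eta}}
=\int^\infty\frac{\chi_k'(s)\,ds}{\chi_{k-1}(s)^{\epsilon-n\eta}} .$$
Since $\chi_{k-1}\simeq e^{\chi_k}$, this integral is finite because $\epsilon>n\eta$.

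\textbf{Main obstacle.} I expect Step 2 to be the delicate point. There are two issues. First, one must justify replacing $\log g$, $\log\log g$, and so on, by the corresponding iterates of $L$ inside the nested compositions; multiplicative constants in $g$, and the comparison $L\simeq\sum b_it_i$, must produce only bounded factors after iteration. Second, the klt factors $|s_i|^{2a_i}$ must be separated from the lc ones without destroying the borderline exponent. For the first issue I would use monotonicity together with $\chi_j(Ct)\le\chi_j(t)+C'$ for $j\ge1$. For the second I would use Hölder with exponent close to $1$, absorbing the loss into the spare power $\chi_{k-1}^{-(\epsilon-n\eta)}$.
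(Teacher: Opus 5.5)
\textbf{Gap: the klt factors near $\pi^{-1}(\lcs(X))$.} The step that fails is your plan to separate the klt factors from the lc factors near $\pi^{-1}(\lcs(X))$.

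The klt factor $\prod_{j>p}|z_j|^{-2b_j}$ is unbounded, so H\"older would force the lc factor into $L^q$ for some $q>1$. But the lc factor is $\prod_{i\le p}|z_i|^{-2}$ times iterated logarithms. An iterated-logarithmic gain such as $\chi_{k-1}^{-(\epsilon-n\eta)}$ cannot absorb the power loss $\prod_{i\le p}|z_i|^{-2(q-1)}$. Hence $\int(\text{lc factor})^q=\infty$ for every $q>1$, whichever way you distribute the exponents.

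Relatedly, your intermediate bound $(\log(1+g_{k,\epsilon}))^n\lesssim\chi_0(L)^n$ is false as stated. Suppose a klt divisor through $y$ has image not contained in $\lcs(X)$. Near points of that divisor off $\pi^{-1}(\lcs(X))$, $g_{k,\epsilon}$ blows up while $L$ stays bounded. The correct bounds are
$(\log(1+g_{k,\epsilon}))^n\lesssim\chi_0(L)^n(-\log|s_{klt}|^2)^n$ and $h_k^n(\cdots)\lesssim(\chi_1(L)\cdots\chi_{k-1}(L))^n\chi_{k-1}(L)^{n\eta}(-\log|s_{klt}|^2)^N$. Both follow from $a+b\le 2ab$ for $a,b\ge1$ and $\chi_j(a+b)\le\chi_j(a)+b$ (since $\chi_j'\le 1$).

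\textbf{The fix.} No H\"older is needed. Combine the bounds above with your monotone replacement $\chi_j(L)^{-1}\lesssim\chi_j(s)^{-1}$, where $s=\sum_{i\le p}t_i$. This replacement uses only $\rho\le C|s_{lc}|^2$. Note that $\pi$ is not assumed to be a log resolution of $\cI$, so the asymptotic formula in your (a) is not available, but this one-sided inequality is all you use. The integrand is then bounded by $F(z_1,\dots,z_p)\cdot G(z_{p+1},\dots,z_n)$, where $G=\prod_{j>p}|z_j|^{-2b_j}(-\log|z_j|^2)^N$ with $b_j<1$. The two factors depend on disjoint sets of snc coordinates, so Tonelli factors the integral. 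Moreover $\int G<\infty$, because the margin $1-b_j>0$ absorbs any power of the logarithm. This is exactly how the paper decouples the two parts.

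\textbf{Comparison with the paper.} With that repair the rest of your argument works, and your Step~3 is a genuinely different route. You integrate $F$ over the simplices $\{t_1+\dots+t_p=s\}$, using $s^{p-1}\chi_0(s)^{-p}\le\chi_0(s)^{-1}$, $\chi_k'=1/(\chi_0\cdots\chi_{k-1})$ and $\chi_{k-1}=e^{\chi_k-1}$. The paper instead proves $\chi_i(\sum_j\tau_j)\gtrsim\prod_j\chi_i^{1/p}(\tau_j)$ by induction. It then splits the lc integral into $p$ one-variable integrals and evaluates each by $k$ successive substitutions. Your co-area computation avoids that auxiliary inequality. It also makes clear that $p\le\nu$ is needed only to beat the simplex volume $s^{p-1}$. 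Finally, your Step~1 check of Condition~(K) via $1/h_k=\chi_{k-2}'/\chi_{k-2}^{1+\eta}$ supplies a verification the paper leaves implicit.
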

\begin{proof}
    We only proof the case $k\geq2$, the proof for $k=1$ is completely the same.
    
    To simplify the notation, we omit $\pi^*$ and view $\r,\ \psi_k$ as functions defined on $Y$. And for two functions $f,g$ on $Y-\cup_i\supp(E_i)$, we say that $f\lesssim g$ if there exists a constant $C>0$ such that $f\leq C(g+1)$, we say that $f\sim g$ if $f\lesssim g$ and $g\lesssim f$.
    
    We define:
    $$|s_{klt}|^2=\prod_{0>a_i>-1} |s_i|_{h_i}^{-2a_i},\quad |s_{lc}|^2=\prod_{a_i=-1}|s_i|_{h_i}^2.$$
    Without loss of generality, we may assume $|s_{klt}|^2,|s_{lc}|^2<e^{-1}$, and $\prod_{a_i>0}|s_i|_{h_i}^2<1$. Then we have
    \begin{align*}
        g_{k,\epsilon}&=\left( \prod_{j=1}^k\frac{e^{-1}}{-\psi_{j-1}} \right)^{n+\nu} \cdot \left( \frac{e^{-1}}{-\psi_{k-1}} \right)^\epsilon\cdot \frac{\prod_{a_i>0}|s_i|_{h_i}^2}{|s_{klt}|^2|s_{lc}|^2}\\
        &\lesssim\left( \prod_{j=1}^k\frac{1}{\chi_{j-1}(-\log\r)} \right)^{n+\nu} \cdot \left( \frac{1}{\chi_{k-1}(-\log\r)} \right)^\epsilon\cdot \frac{1}{|s_{klt}|^2|s_{lc}|^2}.
    \end{align*}
    Recall that $\r=\sum|\sigma_i|_h^2$.  Since each $\sigma_i$ vanishes on $\lcs(X)$, it vanishes at least to order 1 along the divisors $E_i$ with $a_i=-1$. Then we have $\r\leq C |s_{lc}|^2 $; thus,
    \begin{equation}\label{estimate_g}
        g_{k,\epsilon}\lesssim \left( \prod_{j=1}^k\frac{1}{\chi_{j-1}(-\log|s_{lc}|^2)} \right)^{n+\nu} \cdot \left( \frac{1}{\chi_{k-1}(-\log|s_{lc}|^2)} \right)^\epsilon\cdot \frac{1}{|s_{klt}|^2|s_{lc}|^2}.
    \end{equation}
    Similarly, we have the estimate for $\log(1+g_{k,m})$:
    \begin{equation}\label{estimate_logg}
    \begin{aligned}
        \log(1+g_{k,\epsilon})&\lesssim\log\left( 1+\left( \prod_{j=1}^k\frac{1}{\chi_{j-1}(-\log|s_{lc}|^2)} \right)^{n+\nu} \cdot \left( \frac{1}{\chi_{k-1}(-\log|s_{lc}|^2)} \right)^\epsilon\cdot \frac{1}{|s_{klt}|^2|s_{lc}|^2} \right)  \\
        &\lesssim(n+\nu)\sum_{j=1}^k\chi_j(-\log|s_{lc}|^2) + \epsilon\chi_{k}(-\log|s_{lc}|^2)-\log|s_{lc}|^2-\log|s_{klt}|^2\\
        &\lesssim -\log|s_{lc}|^2-\log|s_{klt}|^2.
    \end{aligned}
    \end{equation}
    Here we used the fact that $\chi_j$ is less singular than $\chi_0$ for any $j>0$. Then, we have
    \begin{equation}\label{estimate_logg^n}
        (\log(1+g_{k,\epsilon}))^n\lesssim(-\log|s_{lc}|^2-\log|s_{klt}|^2)^n\lesssim (-\log|s_{lc}|^2)^n \cdot (-\log|s_{klt}|^2)^n,
    \end{equation}
    and
    \begin{equation}\label{estimate_loglogg}
    \begin{aligned}
        \log(1+\log(1+g_{k,\epsilon}))&\lesssim\log(1-\log|s_{lc}|^2-\log|s_{klt}|^2)\\
        &= \chi_1(-\log|s_{lc}|^2-\log|s_{klt}|^2)-1.
    \end{aligned}
    \end{equation}
    We leave it for readers to check the following basic properties of $\chi_j$:
    \begin{itemize}
        \item $\chi_j\circ(\chi_k-1)=\chi_{j+k},\quad\forall j,k\geq0;$
        \item $\chi_j(a+b)\leq\chi_j(a)+b,\quad\forall a,b>1.$
    \end{itemize}
    Recall that $h_k(t):=\chi_0\cdot\chi_1\cdots\chi_{k-2}\cdot\chi_{k-2}^{\eta}(t)$, and we assumed that $|s_{klt}|^2,|s_{lc}|^2<e^{-1}$, then we have
    \begin{equation}\label{estimate_hloglogg}
        \begin{aligned}
            h_k(\log(1+\log(1+g_{k,\epsilon})))&\lesssim \prod_{j=0}^{k-2}\chi_j\circ[\chi_1(-\log|s_{lc}|^2-\log|s_{klt}|^2)-1] \cdot \chi_{k-2}^{\eta}\circ [\chi_1(-\log|s_{lc}|^2-\log|s_{klt}|^2)-1]\\
            &=\prod_{j=0}^{k-2} \chi_{j+1}(-\log|s_{lc}|^2-\log|s_{klt}|^2)\cdot \chi_{k-1}^{\eta}(-\log|s_{lc}|^2-\log|s_{klt}|^2)  \\
            &\leq\prod_{j=1}^{k-1} \left(\chi_{j}(-\log|s_{lc}|^2)-\log|s_{klt}|^2\right)\cdot \left(\chi_{k-1}(-\log|s_{lc}|^2)-\log|s_{klt}|^2  \right)^{\eta}\\
            &\lesssim\prod_{j=1}^{k-1}\chi_j(-\log|s_{lc}|^2)\cdot\chi_{k-1}^{\eta}(-\log|s_{lc}|^2) \cdot (-\log|s_{klt}|^2)^{k+\eta}.
        \end{aligned}
    \end{equation}
    Recall that our goal is to bound the integral $\int_Yw_k(g_{k,\epsilon})\ dV_Y$. Then, by estimates (\ref{estimate_g}), (\ref{estimate_logg^n}) and (\ref{estimate_hloglogg}), we are led to bound the following integral:
    \begin{equation*}\label{integral_to_bound}
        \begin{aligned}
            &\int_Y\frac{(-\log|s_{lc}|^2)^n \cdot (-\log|s_{klt}|^2)^n \cdot \left( \prod_{j=1}^{k-1}\chi_j(-\log|s_{lc}|^2) \right)^n \cdot \left( \chi_{k-1}(-\log|s_{lc}|^2) \right)^{n\eta} \cdot (-\log|s_{klt}|^2)^{n(k+\eta)}}{\left( \prod_{j=1}^k \chi_{j-1}(-\log|s_{lc}|^2) \right)^{n+\nu} \cdot \left( \chi_{k-1}(-\log|s_{lc}|^2) \right)^\epsilon \cdot |s_{lc}|^2 \cdot|s_{klt}|^2}dV_Y\\
            \lesssim&\int_Y\frac{1}{\left( \prod_{j=0}^{k-1} \chi_{j}(-\log|s_{lc}|^2) \right)^{\nu} \cdot \left( \chi_{k-1}(-\log|s_{lc}|^2) \right)^{\epsilon-n\eta} \cdot |s_{lc}|^2 } \cdot \frac{\left( -\log|s_{klt}|^2 \right)^{n(k+1+\eta)}}{|s_{klt}|^2}dV_Y.
        \end{aligned}
    \end{equation*}
    It suffices to check it locally. Let $U\subset\bC^n$ be the unit polydisc equipped with a coordinate system $\{z_1,z_2,\dots,z_n\}$, such that the divisors $E$ with $a_E=-1$ restricted to $U$ are defined by $(z_j=0)$, $j=1,\dots,p$; the divisors $E$ with $a_E>-1$ are defined by $(z_{p+j}=0)$, $j=1,\dots,s$. By the definition of $\nu$, we have $p\leq\nu$. We have $|s_{lc}|^2\sim\prod_{j=1}^p|z_j|^2$ and $|s_{klt}|^2\sim \prod_{j=1}^s|z_{p+j}|^{2b_j}$, where $b_j<1$ is $-1$ times the discrepancy of the divisor $(z_{p+j}=0)$. Then by Fubini's Theorem, the integral to bound becomes:
    $$\int_{U\cap\bC^p}\underbrace{\frac{d\lambda_{\bC^p}}{ \prod_{i=0}^{k-1} \left(\chi_{i}(\sum_{j=1}^p\t_j) \right)^{\nu} \cdot \left( \chi_{k-1}(\sum_{j=1}^p\t_j) \right)^{\epsilon-n\eta} \cdot \prod_{j=1}^p|z_j|^2 }}_{(A)} \cdot \int_{U\cap\bC^s}\underbrace{\frac{\left(\sum_{j=1}^s\t_{p+j}\right)^{n(k+1+\eta)}}{\prod_{j=1}^s|z_{p+j}|^{2b_j}}d\lambda_{\bC^s}}_{(B)}\cdot \int_{U\cap\bC^{n-p-s}}d\lambda_{\bC^{n-p-s}},$$
    where $\t_j=-\log|z_j|^2$. Then it suffices to show that term $(A)$ and term $(B)$ are integrable.

    For term $(A)$, we first claim that for any $i\geq 0$, we have 
    \begin{equation}\label{AMGM for chi}
        \chi_i(\sum_{j=1}^p\t_j)\gtrsim \prod_{j=1}^p\chi_i^{1/p}(\t_j).
    \end{equation}
    Indeed, when $i=0$, this follows from the AM-GM inequality. Now we assume (\ref{AMGM for chi}) is true for $\chi_{i-1}$, we have 
    \begin{align*}
        \chi_i(\sum_{j=1}^p\t_j)&=1+\log(\chi_{i-1}(\sum_{j=1}^p\t_j))\\
        &\gtrsim1+\log(\prod_{j=1}^p\chi_{i-1}^{1/p}(\t_j))\\
        &=\frac{1}{p}\sum_{j=1}^p \left(1+\log(\chi_{i-1}(\t_j))\right)\\
        &\geq \prod_{j=1}^p\chi_i^{1/p}(\t_j).
    \end{align*}
    Therefore, we conclude by induction. Then we have
    \begin{align*}
        (A)&\lesssim\int_{U\cap\bC^p}\frac{d\lambda_{\bC^p}}{ \prod_{i=0}^{k-1} \left(\prod_{j=1}^p\chi_{i}^{1/p}(\t_j) \right)^{\nu} \cdot \left( \prod_{j=1}^p\chi_{k-1}^{1/p}(\t_j) \right)^{\epsilon-n\eta} \cdot \prod_{j=1}^p|z_j|^2 }\\
        \text{(since $p\leq\nu$)}&\lesssim\int_{U\cap\bC^p} \prod_{j=1}^p \frac{d\lambda_{\bC^p}}{\prod_{i=0}^{k-1}\chi_i(\t_j) \cdot \chi_{k-1}^{\epsilon'}(\t_j)\cdot|z_j|^2}\\
        &=\prod_{j=1}^p  \int_{U\cap\bC^1} \frac{d\lambda_{\bC^p}}{\prod_{i=0}^{k-1}\chi_i(\t_j) \cdot \chi_{k-1}^{\epsilon'}(\t_j)\cdot|z_j|^2},
    \end{align*}
    where $\epsilon'=(\epsilon-n\eta)/\nu>0$. Thus, we are reduced to bound the $1$ dimensional integral:
    $$\int_{|z|<1} \frac{\frac{\sqrt{-1}}{2\pi}dz\wedge d\b z}{\prod_{i=0}^{k-1}\chi_i(-\log|z|) \cdot \chi_{k-1}^{\epsilon'}(-\log|z|)\cdot|z|^2}\ ,$$
    using polar coordinates, it becomes
    \begin{align*}
        &\int_0^{2\pi}\int_0^1\frac{r/\pi}{\chi_{k-1}^{\epsilon'}(-\log r)\cdot \prod_{i=0}^{k-1}\chi_i(-\log r)\cdot r^2}dr d\theta\\
        (t_1:=-\log r)=&2\int_0^\infty\frac{1}{\chi_{k-1}^{\epsilon'}(t_1)\cdot \prod_{i=0}^{k-1}\chi_i(t_1)}dt_1\\
        (t_2:=\log(1+t_1))=&2\int_0^\infty\frac{1}{\chi_{k-2}^{\epsilon'}(t_2)\cdot \prod_{i=0}^{k-2}\chi_i(t_2)}dt_2\\
        \cdots\\
        =&2\int_0^\infty \frac{1}{\chi_0^{1+\epsilon'}(t_{k+1})}dt_{k+1}=2\int_0^\infty \frac{1}{(1+t_{k+1})^{1+\epsilon'}}dt_{k+1}<\infty.
    \end{align*}
    
    For term $(B)$, since $b_j<1$, we may choose a small number $0<\delta\ll1$ such that $b_j+\delta<1,\ j=1,\dots,s$. Then
    \begin{align*}
        \int_{U\cap\bC^s}\frac{\left(\sum_{j=1}^s\t_{p+j}\right)^{n(k+1+\eta)}}{\prod_{j=1}^s|z_{p+j}|^{2b_j}}d\lambda_{\bC^s} &\lesssim \int_{U\cap\bC^s}\prod_{j=1}^s\frac{\left(\t_{p+j}\right)^{n(k+1+\eta)}}{|z_{p+j}|^{2b_j}}d\lambda_{\bC^s}\\
        &=\prod_{j=1}^s\int_{|z_j|<1} \frac{ |z_j|^{2\delta} \cdot (-\log|z_j|^2)^{n(k+1+\eta)}}{|z_{p+j}|^{2(b_j+\delta)}}d\lambda_{\bC}\\
        &\lesssim\prod_{j=1}^s\int_{|z_j|<1} \frac{1}{|z_{p+j}|^{2(b_j+\delta)}}d\lambda_{\bC}\\
        &\sim\prod_{j=1}^s\int_0^1 \frac{1}{r^{2(b_j+\delta)}}rdr <\infty.
    \end{align*}
    Thus, we are done.
\end{proof}

\section{Proof of Theorem A and B.}
    
    \begin{theo}[\textit{ = Theorem \ref{thma}}]\label{lowerbdd}\label{lowerbdd=thma}
        Let $X$ be a stable variety of dimension $n$, and $\omega_X \in c_1(K_X)$ be a smooth K\"ahler form. 
    Let $\omega=\omega_X+dd^c\varphi_{KE}$ be the K\"ahler-Einstein metric on $X$.
    Then for any $k\geq 1$ and $\epsilon>0$, there exists a constant $C_{k,\epsilon}$ such that 
        $$\varphi_{KE} \geq -(n+\nu_X)\sum_{j=1}^k\log^{(j)}(-\log\r) -\epsilon\log^{(k)}(-\log\r) + C_{k,\epsilon}.$$
    \end{theo}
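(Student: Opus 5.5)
We construct a subsolution on the log resolution $\pi:Y\to X$ with $\nu_X(\pi)=\nu_X$, following the strategy outlined in the introduction. Set
$$\Psi:=(n+\nu)\sum_{j=1}^k\psi_j+\epsilon\psi_k,$$
with $\psi_j=-\chi_j(-\log\r)$ as in (\ref{define chi_k}). Since $\chi_j(t)=\log^{(j)}(t)+O(1)$ for large $t$, it suffices to prove $\pi^*\vphi_{KE}\ge\pi^*\Psi-C$.

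\textbf{Step 1: positivity.} First I would show that, after rescaling $h$ (i.e.\ replacing $\r$ by $\delta\r$ with $\delta$ small), $A\,\omega_X+dd^c\Psi\ge0$ on $X$ for some constant $A$. Replacing $\Psi$ by $\Psi/A$ would destroy the coefficients, so the point is rather to prove $\omega_X+dd^c\Psi\ge \tfrac12\omega_X$ for $\delta$ small. Writing $\chi_j'(t)=\prod_{i<j}\chi_i(t)^{-1}$, a direct computation gives
$$dd^c(-\chi_j(-\log\r))=\chi_j'\,dd^c\log\r+(\text{positive gradient terms}).$$
One has $dd^c\log\r\ge -C\omega_X$, because $\log\r$ is a log of a sum of squared norms of sections of $L$, with curvature $-\Theta_h$. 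The coefficients $\chi_j'(-\log\r)$ are at most $(-\log \r)^{-1}\cdot(\ldots)$ for $j\ge1$ and tend to $0$ as $\delta\to0$, uniformly. Hence $\omega_X+dd^c\Psi\ge(1-o(1))\omega_X$, so $\pi^*\Psi$ is $\pi^*\omega_X$-psh.

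\textbf{Step 2: finite energy.} I would show $\pi^*\Psi\in\cE^1(Y,\pi^*\omega_X)$ via Lemma \ref{cap decay}. Since $\Psi\ge -C\log(-\log\r)-C$, the sublevel set $\{\Psi<-t\}$ is contained in $\{-\log\r> e^{t/C}\}=\{\log\r<-e^{t/C}\}$. As $\log\r$ is quasi-psh, the standard bound $\Capa(\phi<-s)\le C/s$ for quasi-psh $\phi$ gives capacity $\lesssim e^{-t/C}$, so $\int t^n\Capa(\Psi<-t)\,dt<\infty$. The solution $\pi^*\vphi_{KE}$ also lies in $\cE$ (it has full mass by construction in \cite{BG13stable_var}).

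\textbf{Step 3: correction and comparison.} Since $\omega_X+dd^c\Psi\ge\tfrac12\omega_X\ge0$ is not directly comparable with the right-hand side, I would instead use Proposition \ref{integrable of g_km} together with Corollary \ref{solve KEMA}. These give a bounded $u\in\psh(Y,\pi^*\omega_X/2)$ solving
$$(\tfrac12\pi^*\omega_X+dd^cu)^n=e^{u}\,2^{-n}\,g_{k,\epsilon}\,dV_Y,$$
where the weight $w_k$ satisfies Condition (K). Then $\phi:=\pi^*\Psi+u$ is $\pi^*\omega_X$-psh, lies in $\cE$ (since $u$ is bounded), and satisfies
$$(\pi^*\omega_X+dd^c\phi)^n\ge(\tfrac12\pi^*\omega_X+dd^cu)^n=2^{-n}e^{u}e^{\pi^*\Psi}\prod|s_i|^{2a_i}dV_Y=2^{-n}e^{\phi}\prod|s_i|^{2a_i}dV_Y,$$
using $g_{k,\epsilon}=e^{\pi^*\Psi}\prod|s_i|^{2a_i}$. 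Shifting $\phi\mapsto\phi-n\log2$ gives $\MA(\phi)\ge e^{\phi}\prod|s_i|^{2a_i}dV_Y$, so $\phi$ is a subsolution. Applying the comparison principle (Proposition \ref{comparison}) on $\{\pi^*\vphi_{KE}<\phi\}$ gives
$$\int_{\{\vphi_{KE}<\phi\}} e^{\phi}\mu\le\int_{\{\vphi_{KE}<\phi\}} e^{\vphi_{KE}}\mu,\qquad \mu:=\prod|s_i|^{2a_i}dV_Y.$$
This forces $\mu(\{\vphi_{KE}<\phi\})=0$, hence $\pi^*\vphi_{KE}\ge\phi$ everywhere, since both are quasi-psh and $\mu$ has positive density. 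This yields the theorem with $C_{k,\epsilon}=\inf u-n\log 2$.

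The main obstacle is Step 1: checking that the rescaling makes the negative contribution $\chi_j'\,dd^c\log\r$ uniformly small while keeping the exact coefficients $n+\nu$ and $\epsilon$. Everything else follows from Proposition \ref{integrable of g_km} and standard pluripotential facts.
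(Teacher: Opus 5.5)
Your proposal is correct and follows the paper's proof: the same candidate $\pi^*\Psi$ (the paper's $\psi_{k,\epsilon}$), made $\tfrac12\theta$-psh by shrinking $\rho$ (the paper uses $\theta=\tfrac23\theta+\tfrac13\theta$, which makes no difference), plus the bounded solution $u$ from Proposition~\ref{integrable of g_km} and Corollary~\ref{solve KEMA}, then Proposition~\ref{comparison}. The one genuinely different ingredient is the finite-energy step.

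\emph{The paper's route.} Lemma~\ref{finiteE of psi_k} treats each $\psi_j$ separately. It passes to a further blow-up that principalizes $\cI$ and uses that capacity increases under pullback and is controlled by local Bedford--Taylor capacity. It then reduces to the normal-crossing model $\log\pi^*\rho\sim\sum_j b_j\log|z_j|^2$, invokes \cite[Proposition 2.3]{HenriKE_Poincare+Cone_sing}, and assembles $\Psi$ by convexity of $\cE$.

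\emph{Your route.} You use only that $\log\pi^*\rho$ is quasi-psh, through the elementary bound $\Capa_\theta(\log\pi^*\rho<-s)\le C/s$. This bound does hold for semi-positive big $\theta$: integrate $-\log\pi^*\rho$ against $(\theta+dd^cv)^n$ with $v\in\psh(Y,\theta)$, $0\le v\le 1$, and integrate by parts using $dd^c\log\pi^*\rho\ge-\pi^*\Theta_h(L)\ge -C_L\theta$. Combined with $\Psi\ge -C\chi_1(-\log\rho)$, it upgrades the $1/s$ decay to $\Capa_\theta(\pi^*\Psi<-t)\lesssim e^{-t/C}$, which is more than Lemma~\ref{cap decay} needs.

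\emph{Comparison.} Your version needs no auxiliary resolution and no local-to-global capacity comparison. It also shows at once that every $\theta$-psh function bounded below by $-C\log(-\log\rho)-C$ lies in $\cE^1(Y,\theta)$. In particular it applies directly to $\pi^*\Psi+u$ (your appeal to monotonicity of $\cE$ there is also fine), and equally to the function $\phi$ of Section~4. For this theorem the paper's local computation gives nothing extra, since only the $\log\log$ growth of $\Psi$ matters.
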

    \begin{proof}
        We fix a log resolution $\pi:X\to Y$ such that $\nu_X(\pi)=\nu_X$. We use the same notation as in Section \ref{density}. Since $\psi_j\sim-\log^{(j)}(-\log\r)$, it suffices to show 
        $$\vphi_{KE}\geq(n+\nu_X)\sum_{j=1}^k\psi_j+\epsilon\psi_k+C_{k,\epsilon}$$
        Let $\theta=\pi^*\omega_X$, then the K\"ahler-Einstein potential satisfies the following Monge-Amp\`ere equation on $Y$:
        \begin{equation}\label{MA}
            \left\la(\theta + dd^c \pi^*\varphi_{KE})^n
            \right\ra=e^{\pi^*\varphi_{KE}} \cdot \prod_i|s_i|_{h_i}^{2a_i} \cdot dV_Y,
        \end{equation}
        where $dV_Y$ is a smooth volume form on $Y$, $s_i$ is a section of $\cO_Y(E_i)$ cutting out $E_i$, and $|\cdot|_{h_i}$ is a Hermitian metric on $\cO_Y(E_i)$. 
        
        By Lemma \ref{finiteE of psi_k} below, we may assume $\psi_j\in\cE^1(Y,\frac{1}{3(k-1)(n+\nu)}\theta)$, $j=1,\dots,k-1$; and $\psi_{k}\in\cE^1(Y,\frac{1}{3(n+\nu+\epsilon)}\theta)$. Then, by convexity of $\cE$, we have 
        \begin{equation}\label{temp1}
            \psi_{k,\epsilon}:=(n+\nu_X)\sum_{j=1}^k\psi_j +\epsilon\psi_{k}\in \cE(Y,\frac{2}{3}\theta).
        \end{equation}
        Let $g_{k,\epsilon}:=\exp\left( (n+\nu)\sum_{j=1}^{k}\pi^*\psi_j+\epsilon\pi^*\psi_{k} \right)\prod_i|s_i|_{h_i}^{2a_i}$. By Proposition \ref{integrable of g_km} and Corollary \ref{solve KEMA}, there exists a $u_{k,\epsilon}\in\psh(Y,\frac{1}{3}\theta)\cap L^{\infty}(Y)$ such that 
        $$\left\la\left(\frac{1}{3}\theta+dd^cu_{k,\epsilon}\right)^n\right\ra=e^{u_{k,\epsilon}}g_{k,\epsilon}dV_Y.$$
        Then by (\ref{temp1}) we have
        \begin{equation}
            \begin{aligned}
                \left\la\left(\theta+dd^c\left(\psi_{k,\epsilon}+u_{k,\epsilon} \right)\right)^n\right\ra &\geq\left\la\left(\frac{1}{3}\theta+dd^cu_{k,\epsilon}\right)^n\right\ra\\
                &=e^{u_{k,\epsilon}}g_{k,\epsilon}dV_Y\\
                &=e^{\psi_{k,\epsilon}+u_{k,\epsilon}}\prod_i|s_i|^{2a_i}dV_Y,
            \end{aligned}
        \end{equation}
        Note that $u_{k,\epsilon}\in\cE(Y,\frac{1}{3}\theta)$, and hence $\psi_{k,\epsilon}+u_{k,\epsilon}\in\cE(Y,\theta)$. Then by \hyperref[comparison]{Comparison Principle}, we have
        \begin{align*}
            \int_{\{\pi^*\vphi_{KE}<\psi_{k,\epsilon}+u_{k,\epsilon}\}} e^{\psi_{k,\epsilon}+u_{k,\epsilon}}\prod_i|s_i|^{2a_i}dV_Y &\leq \int_{\{\pi^*\vphi_{KE}<\psi_{k,\epsilon}+u_{k,\epsilon}\}}\MA(\psi_{k,\epsilon}+u_{k,\epsilon})\\
            &\leq\int_{\{\pi^*\vphi_{KE}<\psi_{k,\epsilon}+u_{k,\epsilon}\}}\MA(\pi^*\vphi_{KE})\\
            &=\int_{\{\pi^*\vphi_{KE}<\psi_{k,\epsilon}+u_{k,\epsilon}\}}e^{\pi^*\vphi_{KE}}\prod_i|s_i|^{2a_i}dV_Y.
        \end{align*}
        This forces $\pi^*\vphi_{KE}\geq \psi_{k,\epsilon}+u_{k,\epsilon}$ a.e. with respect to the measure $\prod|s_i|^{2a_i}dV_Y$, then it follows that $\pi^*\vphi_{KE}\geq \psi_{k,\epsilon}+u_{k,\epsilon}$ a.e. and hence everywhere by the basic property of psh functions. Since $u_{k,\epsilon}$ is bounded, we are done.
        \end{proof}

\begin{lemm}\label{finiteE of psi_k}
    For any $1>c>0$ and $k\geq 1$, after rescaling the Hermitian metric $h$ on $L$ if necessary, $\pi^*\psi_k$ defined in (\ref{define chi_k}) is a $c\theta$-psh function. Moreover, $\pi^*\psi_k\in\cE^1(Y,\theta)$.
\end{lemm}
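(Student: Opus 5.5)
We prove both assertions on $X$ first and then pull back: if $\psi_k$ is $c\,\omega_X$-psh on $X$, then $\pi^*\psi_k$ is $c\theta$-psh on $Y$, since pullback preserves quasi-plurisubharmonicity.

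\emph{Step 1: the $dd^c$ computation.} Set $\ell:=-\log\r\geq 1$ (after the rescaling in the Set-up) and write $\psi_k=-\chi_k(\ell)$. For a smooth increasing concave $\chi$,
\[
dd^c(-\chi(\ell))=\chi'(\ell)\,dd^c\log\r-\chi''(\ell)\,d\ell\wedge d^c\ell .
\]
The second term is $\geq 0$ because $\chi_k''\leq0$ for every $k\geq1$ (a composition of increasing concave functions). For the first term, use $\r=\sum|\sigma_i|_h^2$: the standard inequality $dd^c\log\sum|\sigma_i|^2_h\geq -\Theta_h(L)$ holds on $X$ (via an embedding, or on $Y$ where it is honest). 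The weight satisfies $0<\chi_k'(\ell)=\prod_{j=0}^{k-1}\chi_j(\ell)^{-1}\leq 1$. Hence
\[
dd^c\psi_k\geq -\chi_k'(\ell)\,\Theta_h(L)\geq -C\,\omega_X ,
\]
with $C$ fixed by $\Theta_h(L)\leq C\omega_X$. To obtain the constant $c$, replace $h$ by $h e^{-A}$, i.e. $\r$ by $e^{-A}\r$. This leaves the curvature unchanged but makes $\ell\geq A$, so $\chi_k'(\ell)\leq \chi_1'(\ell)=1/(1+\ell)\leq 1/(1+A)$. Choosing $A$ with $C/(1+A)\leq c$ gives $\psi_k\in\psh(X,c\,\omega_X)$, and therefore $\pi^*\psi_k\in\psh(Y,c\theta)$.

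\emph{Step 2: finite energy.} Since $\chi_k\leq\chi_1$, we have $\psi_k\geq\psi_1$. Monotonicity of $\cE^1$ in the semi-positive big class $\theta$ (if $\vphi\geq\psi$ and $\psi\in\cE^1$ then $\vphi\in\cE^1$; see \cite{BEGZ_2010}) reduces the claim to $k=1$. This reduction needs $\psi_k$ and $\psi_1$ to be $\theta$-psh for the same metric, which Step 1 provides. For $\pi^*\psi_1=-1-\log(1+\ell)$ we apply Lemma~\ref{cap decay}. The sublevel set $\{\pi^*\psi_1<-t\}$ is $\{\r<e^{-e^{t-1}+1}\}$. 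The function $u:=\log\r$ lies in $\psh(Y,A'\theta)$ after scaling, and capacity decay for quasi-psh functions with analytic (log) singularities gives
\[
\Capa_\theta(u<-s)\lesssim s^{-1}.
\]
This follows e.g. from $\Capa_\theta(u<-s)\leq C/s$, as in \cite{GZ05cap}, using $u\in L^1$. Taking $s=e^{t-1}-1$, the integrand $t^n\Capa_\theta(\pi^*\psi_1<-t)\lesssim t^n e^{-t}$ is integrable. Lemma~\ref{cap decay} then gives $\pi^*\psi_1\in\cE^1(Y,\theta)$, and hence $\pi^*\psi_k\in\cE^1(Y,\theta)$.

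\emph{Main obstacle.} The capacity estimate in the big (non-Kähler) semipositive setting is the delicate point. If the $1/s$ bound is not directly available, the fallback is to dominate $\Capa_\theta$ by $\Capa_{\omega_Y}$ for a Kähler form $\omega_Y\geq\theta$. This domination holds since $\psh(Y,\theta)\subset\psh(Y,\omega_Y)$ and the Monge–Ampère measures compare via mixed terms. One then uses the classical bound $\Capa_{\omega_Y}(u<-s)\leq C/s$ for $\omega_Y$-psh $u$. Any polynomial decay suffices here, because the sublevels are doubly exponentially small in $t$.
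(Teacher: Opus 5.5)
Your proof is correct. Step 1 is the paper's computation. The paper regularizes with $\rho+\epsilon$, whereas you compute off the zero set, so you should add one line: $\pi^*\psi_k$ is bounded above and tends to $-\infty$ along the analytic set $\{\pi^*\rho=0\}$, so the $c\theta$-psh function on the complement extends across it and coincides with $\pi^*\psi_k$. Also, do the computation on $Y$ rather than on $X$, which need not even be normal.

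For the finite-energy part you take a genuinely different route from the paper.

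\emph{The paper's route.} It blows up further to a log resolution of $(X,\cI)$, so that locally $\log\pi^*\rho=\sum_j b_j\log|z_j|^2+O(1)$. It then controls the global capacity by local Bedford--Taylor capacities and quotes an external result on Poincar\'e-type potentials along snc divisors.

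\emph{Your route.} You reduce to $k=1$ via $\psi_k\ge\psi_1$ and the monotonicity of $E$, which is immediate from its definition as an infimum over bounded majorants. You then note that $\{\pi^*\psi_1<-t\}=\{\log\pi^*\rho<1-e^{t-1}\}$ is a sublevel set of the single quasi-psh function $\log\pi^*\rho$. So the soft weak-type bound $\Capa_\theta(u<-s)\le\Capa_{\omega_Y}(u<-s)\le C/s$ already gives $e^{-t}$ decay.

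\emph{What each buys.} Your argument is shorter, uses no resolution of $\cI$ and no snc normal form, and applies verbatim to $-\chi_k(-u)$ for any quasi-psh $u\le -1$. It also makes transparent the slightly stronger fact used in the proof of Theorem A, namely $\psi_j\in\cE^1(Y,c\,\theta)$ for small $c$. This is equivalent to $\psi_j/c\in\cE^1(Y,\theta)$, where the decay merely becomes $e^{-ct}$. The paper's local route, in exchange, sets up the normal form of $\log\pi^*\rho$ along the exceptional divisors that is reused in the proof of Theorem B. That precision is not needed for the lemma itself.
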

\begin{proof}
    We first show $\psi_{k}\in\psh(Y,c\theta)$. Recall that $\psi_k=-\chi_k(-\log\r)$, and direct computation gives
\begin{equation}
\begin{aligned}
    \frac{d}{dt}[-\chi_k(-t)]&=\prod_{i=0}^{k-1}\frac{1}{\chi_i(-t)}>0,\quad\forall\ t<0 ;\\
    \frac{d^2}{dt^2}[-\chi_k(-t)]&=\prod_{i=0}^{k-1}\frac{1}{\chi_i(-t)} \cdot \sum_{p=0}^{k-1}\prod_{q=0}^p\frac{1}{\chi_q(-t)}>0,\quad\forall\ t<0.
\end{aligned}
\end{equation}
We denote $A_k(t)=\frac{d}{dt}[-\chi_k(-t)]$ and $B_k(t)=\frac{d^2}{dt^2}[-\chi_k(-t)]$. For simplicity, we will omit $\pi^*$ and view $\psi_k,\ \r$ as functions on $Y$. Let $\r_\epsilon=\r+\epsilon$, $\psi_{k,\epsilon}=-\chi_k(-\log\r_\epsilon)$. Then $\{\psi_{k,\epsilon}\}$ is a sequence of smooth functions decreasing to $\psi_k$ pointwise. Then it suffices to show that $\psi_{k,\epsilon}$ are $c\theta$-psh. By the basic chain rule, we have
\begin{equation}\label{ddc psi_kep}
    \begin{aligned}
        dd^c\psi_{k,\epsilon} &= B_k(\log\r_\epsilon)\cdot\frac{d\r\wedge d^c\r}{\r_\epsilon^2}+A_k(\log\r_\epsilon)\cdot \left[ \frac{\sum_{i=1}^k\la D\sigma_i,D\sigma_i\ra}{\r_\epsilon}-\frac{d\r\wedge d^c\r}{\r^2_\epsilon} -\frac{\r}{\r_\epsilon}\pi^*\Theta_h(L)\right]\\
        &\geq B_k(\log\r_\epsilon)\cdot\frac{d\r\wedge d^c\r}{\r_\epsilon^2}+A_k(\log\r_\epsilon)\cdot \left[ \frac{d\r\wedge d^c\r}{\r\cdot\r_\epsilon}-\frac{d\r\wedge d^c\r}{\r^2_\epsilon} -\frac{\r}{\r_\epsilon}\pi^*\Theta_h(L)\right]\\
        & \geq -A_k(\log\r_\epsilon)\cdot \frac{\r}{\r_\epsilon}\cdot\pi^*\Theta_h(L).
    \end{aligned}
\end{equation}
Now we may rescale $h$ such that $A(\log\r_\epsilon)<c/C_L$ for $\epsilon<<1$, where $C_L$ is a constant such that $\Theta_h(L)\leq C_L\omega_X$. Then we have 
$$-A_k(\log\r_\epsilon)\cdot \frac{\r}{\r_\epsilon}\cdot\pi^*\Theta_h(L)\geq -\frac{c\r}{C_L\r_\epsilon}\cdot C_L\theta\geq-c\theta.$$
Thus, $\psi_{k,\epsilon}\in\psh(Y,c\theta)$, and hence $\psi_{k}\in\psh(Y,c\theta)$.
    
Now we show that  $\psi_{k}$ has finite energy. By Lemma \ref{cap decay}, we need to control the capacity decay of $\psi_k$. By \cite{hironaka}, we can further blow up and get a proper holomorphic map $\pi':Y'\to Y$ such that $\pi\circ\pi'$ is a log resolution of $(X,\cI)$, namely a log resolution of $X$ with the additional property that $(\pi\circ\pi')^{-1}\cI\cdot\cO_{Y'}=\cO_{Y'}(-\sum b_j E'_j)$, where $b_j$ is a positive integer attached to every exceptional divisor $E'_j$. Since capacity increases under the pullback of a holomorphic map, it suffices to control the capacity decay on $Y'$. It is known that, up to a universal constant, the global capacity can be controlled by the local capacity in the sense of Bedford-Taylor \cite[Proposition 9.8]{GZdegenerateCMA}; then we are reduced to the case in the unit polydisc in $\bC^n$. On such a polydisc, we have $E'_j=(z_j=0)$, and $(\pi\circ\pi')^*\sigma_i$ trivialized to a holomorphic function $f_i$. Since $(\pi\circ\pi')^{-1}\cI\cdot\cO_{Y'}=\cO_{Y'}(-\sum b_j E'_j)$, we see that 
$$(\pi\circ\pi')^*\r=\sum_i |f_i|^2\cdot (\pi\circ\pi')^*h=\prod_j |z_j|^{2b_j}\cdot\sum_i|\tilde{f_i}|^{2}\cdot (\pi\circ\pi')^*h,$$ 
where $\sum_i|\tilde{f_i}|^{2}$ is a strictly positive smooth function.
Then $(\pi\circ\pi')^*\log\r\sim\sum_jb_j\log|z_j|^2$, and the result follows from \cite[Proposition 2.3]{HenriKE_Poincare+Cone_sing}.
\end{proof}

Now we turn to the proof of Theorem \ref{thmb}:

    \begin{theo}[\textit{ = Theorem \ref{thmb}}]\label{thmb=upperbdd}
        Let $X$ be as above. Let $d=\dim X_{sing}$.
    Assume $X$ admits a log resolution $\pi:Y\to X$ satisfying
    \begin{itemize}
        \item[(1)] every exceptional divisor of $\pi$ has discrepancy -1;
        \item[(2)] $\pi$ is moreover a log resolution of $(X,\cI)$, where $\cI$ is the ideal sheaf of $\lcs(X)$.
    \end{itemize}
    Then there is a constant $C>0$ such that
    $$\vphi_{KE}\leq -(n-d+1)\log(-\log\r)+C.$$ 
    \end{theo}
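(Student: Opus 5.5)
We construct an explicit super-solution on $Y$ and use the \hyperref[comparison]{Comparison Principle} in the direction opposite to the proof of Theorem \ref{lowerbdd=thma}. Set $m=n-d+1$. On $Y$ consider
$$\Phi:=-m\log(-\log\pi^*\r)+C,$$
and aim to prove
$$\la(\theta+dd^c\Phi)^n\ra\leq e^{\Phi}\prod_i|s_i|^{2a_i}dV_Y$$
for $C$ large. By assumption (1), $\prod_i|s_i|^{2a_i}=|s_E|^{-2}$, where $s_E$ cuts out the whole exceptional divisor $E$ (together with the divisors over codimension-one singularities). It also has to be checked that $\Phi$ is $\theta$-psh, at least after adding a small multiple and rescaling $h$; this follows from a chain-rule computation as in Lemma \ref{finiteE of psi_k}, using $t\mapsto -\log(-t)$. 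One also needs $\Phi\in\cE(Y,\theta)$, which follows from the capacity decay argument there via \cite[Proposition 2.3]{HenriKE_Poincare+Cone_sing}, since by (2) we have $\log\pi^*\r=\sum b_i\log|z_i|^2+O(1)$ locally. Once the inequality holds, the comparison principle applied on $\{\Phi<\pi^*\vphi_{KE}\}$ gives
$$\int e^{\pi^*\vphi_{KE}}|s_E|^{-2}\leq\int \MA(\Phi)\leq \int e^{\Phi}|s_E|^{-2},$$
both integrals taken over $\{\Phi<\pi^*\vphi_{KE}\}$. This forces $\pi^*\vphi_{KE}\leq\Phi$ almost everywhere, hence everywhere.

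For the Monge--Amp\`ere computation, write $L=-\log\pi^*\r>1$. Then
$$dd^c\Phi=\frac{m}{L}\,dd^c\log\pi^*\r+\frac{m}{L^2}\,d\log\pi^*\r\wedge d^c\log\pi^*\r.$$
Since $\log\pi^*\r=\log\sum|\sigma_i|^2$ is quasi-psh with $dd^c\log\r\geq-\Theta_h(L)$, and the first term is bounded by $O(1/L)\theta$ in the relevant directions plus a positive part, we have $\theta+dd^c\Phi\leq C\theta'+\frac{m}{L^2}\gamma$. Here $\gamma:=d\log\pi^*\r\wedge d^c\log\pi^*\r$ has rank one, and $\theta'=\theta+\frac{m}{L}dd^c\log\pi^*\r$ is a smooth semipositive form on $Y$ coming from pulling back the Fubini--Study-type metric under the map given by the $\sigma_i$. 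Because $\gamma$ has rank one, expanding the $n$-th power gives
$$(\theta+dd^c\Phi)^n\lesssim \theta'^n+n\frac{m}{L^2}\,\gamma\wedge\theta'^{\,n-1}.$$

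It then suffices to bound each term by $L^{-m}|s_E|^{-2}$ in local coordinates where $E=(z_1\cdots z_p=0)$ and $\log\pi^*\r=\sum_{i\le p}b_i\log|z_i|^2+O(1)$. The form $\gamma$ has poles at worst like $\sum_i \frac{dz_i\wedge d\bar z_i}{|z_i|^2}$, which is absorbed by $|s_E|^{-2}$ thanks to (1). The key input is that $\theta=\pi^*\omega_X$, and thus $\theta'$, degenerate along exceptional divisors: the image $\pi(E_i)$ has dimension at most $d$, so $\theta^{j}$ restricted to the fibres vanishes for $j>d$. Hence $\theta'^{\,n-1}$ contains, up to bounded factors, a product of at least $n-1-d$ differentials $dz_i$ or vanishing coefficients. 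This is where the leftover power of $1/L$ has to be extracted, giving the $L^{-(n-d+1)}$ needed to match $e^{\Phi}=e^CL^{-m}$.

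I expect the main obstacle to be exactly this last step. One must make precise, in the coordinates adapted to (2), that each mixed term $\gamma\wedge\theta^{j}\wedge(dd^c\log\r/L)^{n-1-j}$ yields a total factor at most $L^{-m}|s_E|^{-2}$. The terms $(dd^c\log\r)^{n-1-j}/L^{n-1-j}$ supply powers of $1/L$ when $j$ is small, while the vanishing of $\theta^{j}$ on the fibres over $X_{sing}$ must compensate when $j>d$. My first attempt will treat the terms with $j\leq d$ and $j>d$ separately. For $j\leq d$, the factor $L^{-2-(n-1-j)}\leq L^{-(n-d+1)}$ suffices. For $j>d$, I will try to show that $\gamma\wedge\theta^j\wedge\cdots$ is smooth (no pole), since the fibre-degeneracy of $\theta$ kills the $dz_i\wedge d\bar z_i/|z_i|^2$ component. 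Such a term is then bounded by a constant, and a constant is $\leq L^{-m}|s_E|^{-2}$ because $|s_E|^{-2}$ beats any power of $L$.
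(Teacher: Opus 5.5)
Your plan is essentially the paper's proof. It uses the same super-solution $-(n-d+1)\log(-\log\pi^*\r)+C$, the same rank-one expansion of $(\theta+dd^c\Phi)^n$, the same split of the mixed terms $\gamma\wedge\theta^j\wedge\omega_Y^{n-1-j}$ into $j\le d$ (absorbed by powers of $1/L$) and $j>d$, and the same comparison-principle conclusion. The step you flag is exactly the paper's Claim $\theta^j\wedge\sqrt{-1}dz_i\wedge d\b z_i\le C|z_i|^2\omega_Y^{j+1}$ for $j>d$, which the paper makes quantitative by embedding a neighbourhood of $\pi(y)$ in $\bC^N$, dominating $\theta^j$ by $\sum_K|J_K(f_I)|^2\sqrt{-1}dz_K\wedge d\b z_K$, and using that the holomorphic minors $J_K(f_I)$ with $i\notin K$ vanish on $(z_i=0)$ because $\dim\pi(E_i)\le d<j$ (so the input is $\theta^j|_{E_i}=0$, not vanishing on fibres).
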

    \begin{proof}
        By assumption we have $K_Y=\pi^*K_X-\sum_iE_i$, where $E_i$ is either an exceptional divisor or sets above the codimension 1 singularities. The Monge-Amp\`ere equation (\ref{MA}) becomes
        $$\left\la(\theta + dd^c \pi^*\varphi_{KE})^n
            \right\ra=e^{\pi^*\varphi_{KE}}  \cdot \frac{dV_Y}{\prod_i|s_i|_{h_i}^{2}}\ .$$

        Let $\t=\log\pi^*\r$, $\psi=-\log(-\log\t)$, we are going to show that there exists a constant $C>0$ such that:
        \begin{equation}\label{MA of psi_1}
            \left\la (\theta+(n-d+1)dd^c\psi)^n \right\ra \leq \frac{e^{(n-d+1)\psi}CdV_Y}{\prod_i|s_i|_{h_i}^2}=\frac{CdV_Y}{(-\t)^{n-d+1} \prod_i|s_i|_{h_i}^2}\ .
        \end{equation}
        Once this is established, we can use \hyperref[comparison]{Comparison Principle} as before and deduce following estimate:
        $$\vphi_{KE}\leq (n-d+1)\psi+\log C.$$
        
        We first note that $\psi$ is smooth, and hence locally bounded, outside the analytic subset $A:=\cup_i\mathrm{supp}(E_i)$, then the left hand side of (\ref{MA of psi_1}) is just the trivial extension to $Y$ of the smooth form $(\theta|_{Y\setminus A}+(n-d+1)dd^c\psi|_{Y\setminus A})^n$, which is defined on $Y\setminus A$. Thus it suffices to show (\ref{MA of psi_1}) on $Y\setminus A$. 
        
        Let $\omega_Y$ be a K\"ahler form on $Y$ such that $\omega_Y\geq\theta$, we define $F\in\cC^\infty(Y\setminus A)$ by
        $$F:=\frac{(\theta+(n-d+1)dd^c\psi)^n}{\omega_Y^n}\cdot(-\t)^{n-d+1}\cdot\prod_{i}|s_i|^2_{h_i},$$
        then to show (\ref{MA of psi_1}) is equivalent to showing that $F$ is bounded from above. Since $F$ is only singular near $A$, it suffices to show it locally around $A$.
        
        Let $y\in A$, and let $U$ be a chart centered at $y$ equipped with a coordinate system $\{z_1,\dots,z_n\}$ such that $E_i$'s are defined by $(z_i=0)$, $i=1,\dots,p$. Then as in the proof of Lemma \ref{finiteE of psi_k}, we have $$\t=\sum_{i=1}^pb_i\log|z_i|^2+\log\sum_j|\tilde{f_j}|^2+\log\pi^*h,$$
        where $b_i$'s are positive integers for every $i=1,\dots,p$; and $\sum_j|\tilde{f_j}|^2$ is a non-vanishing smooth function. On $U\setminus A$ we have
        \begin{equation}\label{t and ddct}
        \begin{aligned}
             (-\t)^{n-d+1}\cdot\prod_i|s_i|^2_{h_i}&\lesssim\left(\sum_{i=1}^p(-\log|z_i|)\right)^{n-d+1}\cdot\prod_{i=1}^p|z_i|^2\leq C_1,\\
                dd^c\t&=\underbrace{dd^c\log\sum_j|\tilde{f_j}|^2}_{\text{smooth across $A$}}-\underbrace{\pi^*\Theta_h(L)}_{\geq0} \leq C_1\omega_Y,
        \end{aligned}
        \end{equation}
        for some $C_1>0$, and
        \begin{equation}\label{dt wedge dct}
        \begin{aligned}
            d\t\wedge d^c\t&=\sqrt{-1}\left( \sum_ib_i\frac{dz_i}{z_i}+\text{smooth term} \right)\wedge\left( \sum_ib_i\frac{d\b z_i}{\b z_i}+\text{smooth term} \right)\\
            \text{(by Cauchy-Schwarz)}&\leq C_2'\cdot\sum_{i=1}^p\frac{\sqrt{-1}dz_i\wedge d\b z_i}{|z_i|^2}+\text{smooth term}\\
            &\leq C_2\cdot\left(\sum_{i=1}^p\frac{\sqrt{-1}dz_i\wedge d\b z_i}{|z_i|^2}+\omega_Y \right),
        \end{aligned}
        \end{equation}
        for some $C_2>0$. 
        
        Now recall that
        $dd^c\psi=\frac{dd^c\t}{-\t}+\frac{d\t\wedge d^c\t}{(-\t)^2}$, and note that $d\t\wedge d^c\t$ has rank 1, then we have (on $X\setminus A$)
        \begin{equation}
            \begin{aligned}
                F&=\left[\left(\theta+\frac{dd^c\t}{-\t}\right)^n + n\left(\theta+\frac{dd^c\t}{-\t}\right)^{n-1}\wedge\frac{d\t\wedge d^c\t}{(-\t)^2}\right]\cdot\frac{(-\t)^{n-d+1}\prod_{i}|s_i|^2_{h_i}}{\omega_Y^n}\\
                &=
                \underbrace{\frac{(\theta+\frac{dd^c\t}{-\t})^n}{\omega_Y^n}(-\t)^{n-d+1}\prod_{i}|s_i|^2_{h_i}}_{(\text{I})}  +  \underbrace{\frac{n\left(\theta+\frac{dd^c\t}{-\t}\right)^{n-1}\wedge d\t\wedge d^c\t}{\omega_Y^n} \cdot(-\t)^{n-d-1}\prod_{i}|s_i|^2_{h_i}}_{(\text{II})}
            \end{aligned}
        \end{equation}
        For term (I), by (\ref{t and ddct}), we have 
        $$(\text{I})\leq\frac{(\omega_Y+\frac{C_1}{-\t}\omega_Y)^n}{\omega_Y^n}C_1\leq(1+C_1)^nC_1.$$
        For term (II), we have (on $U\setminus A$)
        \begin{equation*}
        \begin{aligned}
            (\text{II})&\lesssim\frac{(\theta+\frac{\omega_Y}{-\t})^{n-1}\wedge d\t\wedge d^c\t}{\omega_Y^n}\cdot(-\t)^{n-d-1}\prod_{i}|s_i|^2_{h_i}\\
            &=\sum_{j=0}^{n-1}\binom{n-1}{j} \frac{\theta^j\wedge\omega_Y^{n-j-1}\wedge d\t\wedge d^c\t}{\omega_Y^n}\cdot(-\t)^{j-d}\prod_{i}|s_i|^2_{h_i}\\
            \text{by (\ref{dt wedge dct})}&\leq \sum_{j=0}^{n-1}\binom{n-1}{j} \left[\underbrace{\sum_{i=1}^p\frac{\theta^j\wedge \omega_Y^{n-j-1}\wedge \sqrt{-1}dz_i\wedge d\b z_i}{|z_i|^2\omega_Y^n}\cdot(-\t)^{j-d}\prod_{i}|s_i|^2_{h_i}}_{(\text{II}_j)} + \underbrace{\frac{\theta^j\wedge\omega_Y^{n-j}}{\omega_Y^n}\cdot(-\t)^{j-d}\prod_{i}|s_i|^2_{h_i}}_{\text{(III)}}\right]
        \end{aligned}
        \end{equation*}
        The term (III) is obviously bounded by (\ref{t and ddct}). For the term ($\text{II}_j$), if $j\leq d$, we have 
        \begin{align*}
            (\text{II}_j)&\lesssim\sum_{i=1}^p\frac{\theta^j\wedge \omega_Y^{n-j-1}\wedge \sqrt{-1}dz_i\wedge d\b z_i}{|z_i|^2\omega_Y^n}\cdot\frac{1}{(-\t)^{d-j}}
        \end{align*}
        which is obviously bounded. 
        
        Now assume $j>d$, we make the following 
        \paragraph*{Claim} There exists a $C_3>0$ such that for each $1\leq i\leq p$, we have
        $$\theta^j\wedge \sqrt{-1}dz_i\wedge d\b z_i\leq C_3|z_i|^2\omega_Y^{j+1}.$$
        Then we have
        $$(\text{II}_j)\leq\sum_{i=1}^pC_3\cdot(-\t)^{j-d}\prod_{i}|s_i|^2_{h_i}\leq pC_3C_1.$$
        Thus we are done.
    
        \paragraph*{Proof of Claim}
         We embed a small neighborhood $V$ of $\pi(y)\in X$ into $\bC^N$ for some $N>0$, shrinking $U$ if necessary, we assume $U\subset \pi^{-1}(V)$. Then we get a holomorphic map $\tilde{\pi}:U\to \bC^N$. Given an index subset $I=(i_1,i_2,\dots,i_j)\subset\{1,2,\dots,N\}$ with $|I|=j$, we define the associated projection map $\mathrm{pr}_I:\bC^N\to\bC^j$ by 
         $$\mathrm{pr}_I(w_1,\dots,w_N)=(w_{i_1},\dots.w_{i_j})$$
         Let $f_I:=\mathrm{pr}_I\circ\tilde{\pi}$, then we have following diagram:

$$\tikzset{every picture/.style={line width=0.75pt}} %set default line width to 0.75pt        
\begin{tikzpicture}[x=0.75pt,y=0.75pt,yscale=-1,xscale=1]
%uncomment if require: \path (0,788); %set diagram left start at 0, and has height of 788
\useasboundingbox (60,300) rectangle (380,500);

% Text Node
\draw (136.37,208.33) node [anchor=north west][inner sep=0.75pt]   [align=left] { };
% Text Node
\draw (125,326.75) node [anchor=north west][inner sep=0.75pt]    {$( z_{i} =0)$};
% Text Node
\draw (244,326.75) node [anchor=north west][inner sep=0.75pt]    {$U$};
% Text Node
\draw (136,398.75) node [anchor=north west][inner sep=0.75pt]    {$X_{sing}$};
% Text Node
\draw (245,399.75) node [anchor=north west][inner sep=0.75pt]    {$V$};
% Text Node
\draw (337,398.75) node [anchor=north west][inner sep=0.75pt]    {$\mathbb{C}_w^{N}$};
% Text Node
\draw (340,475.75) node [anchor=north west][inner sep=0.75pt]    {$\mathbb{C}_w^{j}$};
% Text Node
\draw (120,475.75) node [anchor=north west][inner sep=0.75pt]    {$\mathrm{pr}_{I}( X_{sing})$};
% Text Node
\draw (297,423.75) node [anchor=north west][inner sep=0.75pt]  [font=\small,color={rgb, 255:red, 66; green, 106; blue, 179 }  ,opacity=1 ]  {$\textcolor[rgb]{0.26,0.42,0.7}{f_I}$};
% Text Node
\draw (352,432.75) node [anchor=north west][inner sep=0.75pt]  [font=\small]  {$\mathrm{pr}_{I}$};
% Text Node
\draw (299,352.75) node [anchor=north west][inner sep=0.75pt]  [font=\small]  {$\tilde{\pi }$};
% Text Node
\draw (73,415.75) node [anchor=north west][inner sep=0.75pt]  [font=\small,color={rgb, 255:red, 66; green, 106; blue, 179 }  ,opacity=1 ]  {$f_I|_{( z_{s} =0)}$};
% Text Node
\draw (238,363.75) node [anchor=north west][inner sep=0.75pt]  [font=\small]  {$\pi $};
% Text Node
\draw (260,324.75) node [anchor=north west][inner sep=0.75pt]    {$\subset \mathbb{C}_z^{n}$};
% Connection
\draw    (193,334.95) -- (239,334.48) ;
\draw [shift={(241,334.46)}, rotate = 179.42] [color={rgb, 255:red, 0; green, 0; blue, 0 }  ][line width=0.75]    (4.37,-1.32) .. controls (2.78,-0.56) and (1.32,-0.12) .. (0,0) .. controls (1.32,0.12) and (2.78,0.56) .. (4.37,1.32)   ;
\draw [shift={(193,334.95)}, rotate = 359.42] [color={rgb, 255:red, 0; green, 0; blue, 0 }  ][line width=0.75]      (0,-4.47) .. controls (-1.23,-4.47) and (-2.24,-3.47) .. (-2.24,-2.24) .. controls (-2.24,-1) and (-1.23,0) .. (0,0) ;
% Connection
\draw    (193,484.35) -- (335,484.35) ;
\draw [shift={(337,484.35)}, rotate = 180] [color={rgb, 255:red, 0; green, 0; blue, 0 }  ][line width=0.75]    (10.93,-3.29) .. controls (6.95,-1.4) and (3.31,-0.3) .. (0,0) .. controls (3.31,0.3) and (6.95,1.4) .. (10.93,3.29)   ;
% Connection
\draw    (153.59,348.35) -- (153.9,392.35) ;
\draw [shift={(153.91,394.35)}, rotate = 269.6] [color={rgb, 255:red, 0; green, 0; blue, 0 }  ][line width=0.75]    (10.93,-3.29) .. controls (6.95,-1.4) and (3.31,-0.3) .. (0,0) .. controls (3.31,0.3) and (6.95,1.4) .. (10.93,3.29)   ;
% Connection
\draw    (154.17,420.35) -- (154.81,469.35) ;
\draw [shift={(154.83,471.35)}, rotate = 269.26] [color={rgb, 255:red, 0; green, 0; blue, 0 }  ][line width=0.75]    (10.93,-3.29) .. controls (6.95,-1.4) and (3.31,-0.3) .. (0,0) .. controls (3.31,0.3) and (6.95,1.4) .. (10.93,3.29)   ;
% Connection
\draw    (262,342.17) -- (332.4,394.61) ;
\draw [shift={(334,395.81)}, rotate = 216.68] [color={rgb, 255:red, 0; green, 0; blue, 0 }  ][line width=0.75]    (10.93,-3.29) .. controls (6.95,-1.4) and (3.31,-0.3) .. (0,0) .. controls (3.31,0.3) and (6.95,1.4) .. (10.93,3.29)   ;
% Connection
\draw    (251.58,346.35) -- (251.9,393.35) ;
\draw [shift={(251.92,395.35)}, rotate = 269.61] [color={rgb, 255:red, 0; green, 0; blue, 0 }  ][line width=0.75]    (10.93,-3.29) .. controls (6.95,-1.4) and (3.31,-0.3) .. (0,0) .. controls (3.31,0.3) and (6.95,1.4) .. (10.93,3.29)   ;
% Connection
\draw    (270,407.35) -- (332,407.35) ;
\draw [shift={(334,407.35)}, rotate = 180] [color={rgb, 255:red, 0; green, 0; blue, 0 }  ][line width=0.75]    (4.37,-1.32) .. controls (2.78,-0.56) and (1.32,-0.12) .. (0,0) .. controls (1.32,0.12) and (2.78,0.56) .. (4.37,1.32)   ;
\draw [shift={(270,407.35)}, rotate = 0] [color={rgb, 255:red, 0; green, 0; blue, 0 }  ][line width=0.75]      (0,-4.47) .. controls (-1.23,-4.47) and (-2.24,-3.47) .. (-2.24,-2.24) .. controls (-2.24,-1) and (-1.23,0) .. (0,0) ;
% Connection
\draw    (183,407.35) -- (240,407.35) ;
\draw [shift={(242,407.35)}, rotate = 180] [color={rgb, 255:red, 0; green, 0; blue, 0 }  ][line width=0.75]    (4.37,-1.32) .. controls (2.78,-0.56) and (1.32,-0.12) .. (0,0) .. controls (1.32,0.12) and (2.78,0.56) .. (4.37,1.32)   ;
\draw [shift={(183,407.35)}, rotate = 0] [color={rgb, 255:red, 0; green, 0; blue, 0 }  ][line width=0.75]      (0,-4.47) .. controls (-1.23,-4.47) and (-2.24,-3.47) .. (-2.24,-2.24) .. controls (-2.24,-1) and (-1.23,0) .. (0,0) ;
% Connection
\draw    (349.58,420.35) -- (349.9,469.35) ;
\draw [shift={(349.92,471.35)}, rotate = 269.63] [color={rgb, 255:red, 0; green, 0; blue, 0 }  ][line width=0.75]    (10.93,-3.29) .. controls (6.95,-1.4) and (3.31,-0.3) .. (0,0) .. controls (3.31,0.3) and (6.95,1.4) .. (10.93,3.29)   ;
% Connection
\draw [color={rgb, 255:red, 66; green, 106; blue, 179 }  ,draw opacity=1 ]   (259.38,346.35) -- (297.95,405.09)(301.79,410.94) -- (340.37,469.68) ;
\draw [shift={(341.46,471.35)}, rotate = 236.71] [color={rgb, 255:red, 66; green, 106; blue, 179 }  ,draw opacity=1 ][line width=0.75]    (10.93,-3.29) .. controls (6.95,-1.4) and (3.31,-0.3) .. (0,0) .. controls (3.31,0.3) and (6.95,1.4) .. (10.93,3.29)   ;
% Connection
\draw [color={rgb, 255:red, 66; green, 106; blue, 179 }  ,draw opacity=1 ]   (142.55,348.35) .. controls (105.89,386.93) and (106.16,427.5) .. (143.37,470.06) ;
\draw [shift={(144.51,471.35)}, rotate = 228.3] [color={rgb, 255:red, 66; green, 106; blue, 179 }  ,draw opacity=1 ][line width=0.75]    (10.93,-3.29) .. controls (6.95,-1.4) and (3.31,-0.3) .. (0,0) .. controls (3.31,0.3) and (6.95,1.4) .. (10.93,3.29)   ;

\end{tikzpicture}$$
        By definition, $\omega_X$ extends to a K\"ahler form on $\bC^N$, then $\theta$ is the pull back of a K\"ahler form on $\bC^N$. Let $\omega_N=\sum_{i=1}^N \sqrt{-1}dw_i\wedge d\b w_i$ be the euclidean metric on $\bC^N$ and $\omega_j$ be the euclidean metric on $\bC^j$, then there exists a constant $C_4$ such that $\theta\leq C_4\tilde{\pi}^*\omega_N$. 
        By definition of $\mathrm{pr}_I$, we have $\omega_N^j=\sum_{|I|=j}\iota_I\mathrm{pr_I}^*(\omega_j)^j$, where $\iota_I$ is a constant. Then we have
        \begin{equation}\label{bbb}
            \theta^j\leq C_4^j\tilde{\pi}^*\omega_N^j = C_4^j\sum_{|I|=j}\iota_If_I^*(\omega_j)^j.
        \end{equation}
        Thus, it suffices to show 
        $$f_I^*(\omega_j)^j\wedge \sqrt{-1}dz_i\wedge d\b z_i\leq C_3|z_i|^2\omega_n^{j+1}.$$
        for some constant $C_3>0$. Let $f=(f_1,\dots,f_j):\mathbb C_z^n\to \mathbb C_w^j$, then
        $f_I^*\omega_j=\sqrt{-1}\sum_{\alpha=1}^j df_\alpha\wedge d\bar f_\alpha$, and we have
        \begin{equation}\label{aaa}
        \begin{aligned}
            (f_I^*\omega_j)^j&=j!(\sqrt{-1})^j df_1\wedge d\bar f_1\wedge\cdots\wedge df_j\wedge d\bar f_j\\
            &=j!(\sqrt{-1})^j(-1)^{\frac{j(j-1)}2}df_1\wedge\cdots\wedge df_j\wedge d\bar f_1\wedge\cdots\wedge d\bar f_j\\
            &=j!(\sqrt{-1})^j(-1)^{\frac{j(j-1)}2}\left(\sum_{|K|=j}J_K(f)\,dz_K\right)\wedge \left(\sum_{|L|=j}\overline{J_L(f)}\,d\b z_L\right)\\
            &\leq C_5\sum_{|K|=j}|J_K(f)|^2\,\sqrt{-1}dz_K\wedge d\bar z_K .
        \end{aligned}
        \end{equation}
        where
        $J_K(f)=\det\left(\frac{\partial f_\alpha}{\partial z_k}\right)_{\alpha=1,\dots,j;\ k\in K}$. Now we consider the map $f_I|_{(z_i=0)}:(z_i=0)\to\mathrm{pr}_I(X_{sing})$, since $\mathrm{pr}_I(X_{sing})$ has dimension $\leq d$, and we assumed $j>d$, then we have $(\omega_j)^j=0$ on the regular part of $\mathrm{pr}_I(X_{sing})$. Then it follows from \cite[Lemma 1.3]{Dem85} that $(f_I^*\omega_j|_{(z_i=0)})^j=0$. Then by (\ref{aaa}), we have $J_K(f)\overline{J_L(f)}=0$ on $(z_i=0)$ if $i\notin K,L$. In particular we have $|J_K(f)|^2=0$ on $(z_i=0)$ if $i\notin K$. Since $J_K(f)$ is a holomorphic function, we have
        \begin{equation}\label{ccc}
            |J_K(f)|^2\leq|z_i|^2C_6,\ \mathrm{if}\ i\notin K,
        \end{equation}
        for some constant $C_6>0$. Therefore, by (\ref{bbb}), (\ref{aaa}) and (\ref{ccc}), we have
        \begin{equation*}
            \begin{aligned}
                \theta^j\wedge \sqrt{-1}dz_i\wedge d\b z_i &\leq C_4^j\sum_{|I|=j}\iota_I C_5\sum_{|K|=j} \left|J_K(f)\right|^2 dz_K\wedge d\b z_K\wedge dz_i\wedge d\b z_i\\
                &=C_4^jC_5\sum_{\substack{|I|=|K|=j\\ i\notin K}}\iota_I \left|J_K(f)\right|^2 dz_K\wedge d\b z_K\wedge dz_i\wedge d\b z_i\\
                &\leq C_4^jC_5C_6\sum_{\substack{|I|=|K|=j\\ i\notin K}}\iota_I \left|z_i\right|^2 dz_K\wedge d\b z_K\wedge dz_i\wedge d\b z_i\\
                &\leq C_7|z_i|^2\omega_Y^{j+1},            
                \end{aligned}
        \end{equation*}
        for some $C_7>0$. Thus we are done.
    \end{proof}

\section{Proof of Theorem \ref{thmc}}
\begin{comment}
\begin{defi}
    Let $X$ be a compact complex manifold of dimension $n$, let $E=\sum_{i\in I}E_i$ be a reduced divisor with simple normal crossing support. Let $k\in\{1,2,\dots,n\}$ be a positive integer We say that $E$ admits a \textit{good covering of rank $k$} if there exists an open covering $\{U_\a\}_{\a\in \cA}$ of $E$ such that 
    \begin{itemize}
        \item[(a)] each $U_\a$ admits a coordinate system $\{z_1,\dots,z_n\}$ such that $E\cap U_\a$ is defined by $\{\prod_{i=1}^{p_\a}z_i=0\}$;
        \item[(b)] $p_\a=k,\quad\forall\a\in A$.
    \end{itemize}
\end{defi}
\end{comment}

In this section, we prove the following theorem:
\begin{theo}[\textit{ = Theorem \ref{thmc}}]\label{2sideest=thmc}
    Let $X$ be as in Theorem \ref{thma}. Let $d=\dim X_{sing}$. Assume $X$ admits a log resolution $\pi:Y\to X$ that satisfies the following two conditions:
    \begin{itemize}
        \item[(1)] every exceptional divisor of $\pi$ has discrepancy -1;
        \item[(2')] $\pi$ is a composition of finitely many blow-ups along smooth centers.
    \end{itemize}
    Let $K_Y=\pi^*K_X-\sum_{i\in I}E_i$, where $E_i$ is either an exceptional divisor or a divisor setting above the codimension 1 singularities. For every $i\in I$, let $s_i$ be a section of $\cO_Y(E_i)$ that cuts out $E_i$. Let $h_i$ be a Hermitian metric on $\cO_Y(E_i)$. Then we have
    $$-(n-d+1)\log(-\log\prod_{i\in I}|s_i|^2_{h_i})+\cO(1)\geq\pi^*\vphi_{KE}\geq-(n+\nu_X(\pi))\log(-\log\prod_{i\in I}|s_i|^2_{h_i})+\cO(1).$$
\end{theo}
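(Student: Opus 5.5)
The plan is to follow the two-sided comparison strategy of Theorems A and B, but to build the barriers directly on $Y$ from $\sigma:=\prod_{i\in I}|s_i|^2_{h_i}$ instead of from $\pi^*\r$. We set $\t:=\log\sigma$ and $\Psi:=-\log(-\t)$.

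The key input from (2') is a positivity statement: for suitable hermitian metrics $h_i$, the form $\theta-\delta\sum_i c_i\Theta_{h_i}(E_i)$ is semi-positive, and strictly positive away from $\cup E_i$. Here $\theta=\pi^*\omega_X$, $\delta$ is small, and the $c_i>0$ are weights. This holds because, for iterated blow-ups along smooth centers, $-\sum c_iE_i$ is $\pi$-ample for suitable positive weights.

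I would handle the weights as follows. Changing $\sigma$ to $\prod|s_i|^{2c_i}$ only alters $\log(-\t)$ by $O(1)$, since $\min c_i\le(\log\prod|s_i|^{2c_i})/\t\le\max c_i$. So the statement is insensitive to the weights, and I may work with the weighted product. After rescaling the $h_i$, the argument of Lemma \ref{finiteE of psi_k} then shows that $\Psi$ is $c\theta$-psh for any small $c$.

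Indeed, $dd^c\Psi=\frac{dd^c\t}{-\t}+\frac{d\t\wedge d^c\t}{\t^2}$, and $dd^c\t=-\sum c_i\Theta_{h_i}(E_i)\ge -C\theta$ by the positivity above. Since $-\t$ is large, the first term is $\ge -c\theta$. Finite energy of $\Psi$ follows as in Lemma \ref{finiteE of psi_k}, because $\t\sim\sum b_i\log|z_i|^2$ in SNC coordinates, and \cite[Proposition 2.3]{HenriKE_Poincare+Cone_sing} applies directly without further blow-ups.

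\emph{Lower bound.} I would repeat the proof of Theorem \ref{lowerbdd=thma} with $k=1$, taking the subsolution $(n+\nu_X(\pi))\Psi+u$. The density to check is $g=(-\t)^{-(n+\nu)}\prod|s_i|^{-2}$. Proposition \ref{integrable of g_km} is designed to absorb the extra $\epsilon$, so I would instead verify directly that $\int w(g)\,dV_Y<\infty$ for $w(t)=t(\log(1+t))^n(\log(1+\log(1+t)))^{n+\epsilon'}$.

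Locally, with $p\le\nu$ divisors, $\log(1+g)\sim -\t$, and AM–GM gives $(-\t)^{-\nu}\lesssim\prod_{j\le p}\t_j^{-\nu/p}\le\prod_{j\le p}\t_j^{-1}$. This leaves the integral
$$\int\prod_{j\le p}\frac{(\log\t)^{n+\epsilon'}}{\t_j|z_j|^2},$$
which diverges. So I would aim for $w(t)=t\,h(\log t)$-type weights with $h$ just barely integrable. Alternatively, I would accept the Kołodziej-type weight with $(\log(1+t))^{n}$ and exploit the fact that the exact $\MA(\Psi)$ is already computable.

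The cleaner alternative is to compute $\MA((n+\nu)\Psi)$ directly. With Poincaré-type growth, $\theta+(n+\nu)dd^c\Psi\ge \frac{c\,\sqrt{-1}\,d\t\wedge d^c\t}{\t^2}+c'\theta$, and in the transverse directions it behaves like the cusp metric. I would then seek a bounded correction by solving $(\tfrac13\theta+dd^cu)^n=e^u\,\frac{\MA\text{-deficit}}{\,}$, with this deficit checked in $L^p$. I expect this integrability or mass-comparison step at the exact exponent $n+\nu$, with no $\epsilon$, to be the main obstacle.

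\emph{Upper bound.} I would mimic the proof of Theorem \ref{thmb=upperbdd} with $\t=\log\sigma$ in place of $\log\pi^*\r$. Condition (1) again places the pole $\prod|s_i|^{-2}$ on the right-hand side. The Claim there only uses that $E_i$ maps into $X_{sing}$, of dimension $\le d$, so it carries over verbatim. It then gives $\MA((n-d+1)\Psi)\le C e^{(n-d+1)\Psi}\prod|s_i|^{-2}dV_Y$, and the comparison principle yields the upper bound.
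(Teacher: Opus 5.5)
Your upper bound is correct and is the paper's argument. The paper also reruns the proof of Theorem \ref{thmb=upperbdd} with $\tau$ replaced by a weighted $\log\prod_i|s_i|^{2c_i}$. There, condition (1) puts the pole $\prod_i|s_i|^{-2}$ on the right-hand side, and the Claim uses only $\dim\pi(E_i)\le d$.

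The lower bound, however, is not proved, and the obstruction is structural, not a matter of finding a better weight.

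First, every weight allowed in Condition (K) satisfies $w(t)\ge t(\log(1+t))^n$ for large $t$. Take $g=(-\tau)^{-(n+\nu)}\prod_i|s_i|^{-2}$ and a point where $\nu$ components meet. In the variables $t_j=-\log|z_j|^2$, $\int g(\log g)^n\,dV_Y$ is bounded below by a constant times $\int_{[T_0,\infty)^\nu}\frac{dt_1\cdots dt_\nu}{(t_1+\cdots+t_\nu)^{\nu}}=+\infty$. So Ko\l{}odziej's theorem cannot produce a bounded correction $u$, whatever weight you choose.

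Second, your ``cleaner alternative'' fails as well, as soon as $\nu\ge2$. With a single weight vector, $d\tau\wedge d^c\tau$ has rank one. On $Y\setminus A$ this only gives $\MA((n+\nu)\Psi)\lesssim\bigl(1+\tau^{-2}\sum_{i\le p}|z_i|^{-2}\bigr)\omega_Y^n$. Meanwhile $e^{(n+\nu)\Psi}\prod_i|s_i|^{-2}dV_Y\sim\frac{\omega_Y^n}{(-\tau)^{n+\nu}\prod_{i\le p}|z_i|^2}$. Near a point where two components meet, along $|z_1|=|z_2|=r$, the ratio of right to left is of order $r^{-2}(-\tau)^{-(n+\nu-2)}\to\infty$. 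So $(n+\nu)\Psi+C$ is never a subsolution. The deficit you would have to absorb is essentially all of $g$, which is exactly the density shown above to be inadmissible; no $L^p$ or $L(\log L)^n$ bound can hold for it.

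The missing idea is to use several potentials with different, generic weights.
\begin{enumerate}
\item Choose the weights. By (2') and openness of positivity, pick $\nu=\nu_X(\pi)$ weight vectors $\bc^\ell=(c_i^\ell)_{i\in I}$ with $\frac1\nu\theta-\sum_i c_i^\ell\Theta_{h_i}(E_i)$ K\"ahler for every $\ell$. Rank deficiency is a proper Zariski-closed condition, so you can also require that for every $K\subset I$ with $|K|\le\nu$ the matrix $[c_k^\ell]_{k\in K,\,1\le\ell\le\nu}$ has rank $|K|$.
\item Define the candidate. Set $\tau_\ell=\log\prod_i|s_i|^{2c_i^\ell}$ and $\phi=(1+\frac n\nu)\sum_{\ell}\bigl(-\log(-\tau_\ell)\bigr)$, so that $\phi\sim-(n+\nu)\log(-\tau)$.
\item Gain the product pole. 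Now $\sum_\ell d\tau_\ell\wedge d^c\tau_\ell$ has full rank $p$ in the directions $dz_i/z_i$. A determinant computation (Lemma \ref{linear algebra}) gives $\omega_Y^{n-p}\wedge\bigl(\sum_\ell d\tau_\ell\wedge d^c\tau_\ell\bigr)^p\gtrsim\omega_Y^n/\prod_{i\le p}|z_i|^2$.
\item Conclude. Hence $\MA(\phi)\gtrsim\frac{\omega_Y^n}{(-\tau)^{n+p}\prod_{i\le p}|z_i|^2}\ge\frac{\omega_Y^n}{(-\tau)^{n+\nu}\prod_{i\le p}|z_i|^2}\sim e^{\phi}\prod_i|s_i|^{-2}dV_Y$, using $p\le\nu$. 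So $\phi+C'$ is an honest subsolution, with no correction term and no integrability estimate. The comparison principle then gives the lower bound.
\end{enumerate}
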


We first fix some notation. By assumption, $X$ admits a log resolution $\pi:Y\to X$ which is a composition of finitely many blow-ups along smooth centers and such that every exceptional divisor has discrepancy -1. Let $\nu=\nu_X(\pi)$ (see Definition \ref{define_nu_X}). Let $\{E_i,\ i\in I\}$ denote the set of exceptional divisors. For every $i\in I$, let $s_i$ be a section of $\cO_Y(E_i)$ that cuts out $E_i$. Let $h_i$ be a Hermitian metric on $\cO_Y(E_i)$ which will be fixed later. Let $\omega_X\in c_1(K_X)$ be a K\"ahler form on $X$, $\theta:=\pi^*\omega_X$. Then the K\"ahler-Einstein potential satisfies the following Monge-Amp\`ere equation on $Y$:
\begin{equation}\label{isoMA}
    \la(\theta+\pi^*\vphi_{KE})^n\ra=\frac{e^{\pi^*\varphi_{KE}} dV_Y}{\prod_{i\in I}|s_i|_{h_i}^2}.
\end{equation}
Our proof is divided into 2 steps:
\begin{itemize}
    \item Construct a suitable $\theta$-psh function $\phi$ of finite energy on $Y$;
    \item Show that $\phi$ is a sub/super-solution to (\ref{isoMA}) (up to some constant);
\end{itemize}

\subsection*{Step 1: Construct a suitable $\theta$-psh function $\phi$ of finite energy on $Y$.}\label{step1} 
The following lemma will be useful:
\begin{lemm}
    Fix $m\leq|I|$. Then for every $i\in I$, there exists a vector $\bc_i=(c_{i}^1, c_{i}^2,\dots,c_{i}^m)\in(\bR_{>0})^m$ and a Hermitian metric $h_i$ on $\cO_Y(E_i)$ such that the following two conditions are satisfied:
    \begin{itemize}
        \item[1.]  $\forall K\subset I$, if $|K|\leq m$, then the matrix $[c_{k}^\ell]_{k\in K,\ 1\leq \ell\leq m}$ has rank $|K|$;
        \item[2.] $\forall \ell\in\{1,2,\dots,m\}$, $\frac{1}{m}\theta-\sum_{i\in I} c_{i}^\ell\Theta_{h_i}(E_i)$ is a K\"ahler form on $Y$.
    \end{itemize}
\end{lemm}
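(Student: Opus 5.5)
The plan is to decouple the two conditions. Condition 1 is purely linear-algebraic: it says the vectors $\bc_i$ are in general position. Condition 2 will be obtained from the blow-up structure (2') of $\pi$, after shrinking all the $\bc_i$ by a common small factor. Shrinking preserves general position, so the two conditions do not interfere.

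\emph{Step 1 (a reference curvature).} By (2'), write $\pi=\pi_N\circ\dots\circ\pi_1$ as a composition of blow-ups along smooth centers. Each exceptional divisor $E_i$ is the total transform of the exceptional divisor of some $\pi_r$, so the exceptional $E_i$ are exactly the components of the exceptional locus of $\pi$. I will use the standard fact that, for a sequence of blow-ups of a projective variety with smooth centers, there are positive rationals $\delta_i$ (chosen inductively, each step much smaller than the previous one) such that $\pi^*A-\sum_i\delta_iE_i$ is ample on $Y$ for $A$ ample on $X$. At each step this rests on the fact that $-E$ is $\pi_r$-ample for a single blow-up.

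Applying this with $A=\frac1{2m}K_X$, I get Hermitian metrics $h_i^0$ on $\cO_Y(E_i)$ such that
$$\tfrac1{2m}\theta-\sum_i\delta_i\Theta_{h_i^0}(E_i)$$
is a Kähler form. Here I use that $\theta=\pi^*\omega_X$ and that a cohomology class represented by an ample class contains a Kähler form; I then adjust the $h_i^0$ by global smooth weights so that the form itself, and not merely its class, is positive. The divisors lying over the codimension-one singularities (the strict transforms of the conductor) also belong to $I$. I expect to handle them in the same way by working on the normalization, viewing the first blow-up as starting from $(X^\nu,C)$. Alternatively, I will simply note that the paper's $I$ indexes the exceptional divisors, and then these divisors do not arise.

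\emph{Step 2 (general position, then scaling).} Choose vectors $\bc_i'\in(\bR_{>0})^m$ such that every subfamily of at most $m$ of them is linearly independent. For instance, take moment-curve vectors $(1,t_i,t_i^2,\dots,t_i^{m-1})$ with distinct $t_i>0$: the $m\times m$ Vandermonde minors are nonzero, so every $|K|\le m$ subfamily has rank $|K|$. Set $\bc_i=\lambda\bc_i'$ with $\lambda>0$ small, and put
$$h_i:=h_i^0 \quad\text{and}\quad c_i^\ell=\lambda c_i'^{\ell}.$$
Then, for each $\ell$,
$$\tfrac1m\theta-\sum_i c_i^\ell\Theta_{h_i}(E_i)=\Big(\tfrac1{2m}\theta-\sum_i\delta_i\Theta_{h_i}\Big)+\Big(\tfrac1{2m}\theta-\sum_i(c_i^\ell-\delta_i)\Theta_{h_i}\Big).$$
The first bracket is Kähler. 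The second is not positive in general, because $\theta$ degenerates along the exceptional locus, so this decomposition alone is insufficient.

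The fix is to make the coefficient vector proportional to $\delta$ in its dominant part. Take $c_i^\ell=\lambda(\delta_i+\mu\,c_i'^\ell)$ with $0<\lambda\le1$ and $\mu>0$ small. Then
$$\tfrac1m\theta-\sum_ic_i^\ell\Theta_{h_i}=(1-\lambda)\tfrac1m\theta+\lambda\Big(\tfrac1m\theta-\sum_i\delta_i\Theta_{h_i}\Big)-\lambda\mu\sum_ic_i'^\ell\Theta_{h_i}.$$
Taking $\lambda=1$, the middle term is Kähler, since it dominates $\tfrac1{2m}\theta-\sum_i\delta_i\Theta_{h_i}$, which is Kähler. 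The last term is a smooth form of size $O(\mu)$, so it is absorbed for $\mu$ small.

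It remains to check the rank condition for $\bc_i=(\delta_i+\mu c_i'^\ell)_\ell=\delta_i\mathbf 1+\mu\bc_i'$. The minors of $[c_k^\ell]$ are polynomials in $\mu$. To ensure a nonzero $|K|$-minor for some $\mu$, I will choose $\bc_i'$ generically, e.g. moment-curve vectors where $\mathbf 1$ is itself a moment-curve vector with $t=0$ and the $t_i$ are distinct and nonzero. Then the family $\{\mathbf 1\}\cup\{\bc_i'\}$ is in general position, and the leading $\mu$-term of each minor is nonvanishing. Since there are finitely many subsets $K$, all but finitely many small $\mu$ work. Positivity of the entries is automatic.

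The main obstacle is Step 1: producing the positive $\delta_i$ with $\pi^*A-\sum\delta_iE_i$ ample for the iterated blow-up, including total-transform bookkeeping and the conductor divisors. I would first try the inductive argument with rapidly decreasing coefficients.
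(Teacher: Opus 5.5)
Your proposal is correct and is essentially the paper's argument. Both start from the well-known fact that $\frac1m\pi^*K_X-\sum_i a_iE_i$ is ample for some $a_i>0$ and fix the $h_i$ so that the corresponding form is Kähler; both then perturb the coefficients inside the open set where Kählerness persists, choosing the perturbation generically to obtain the rank condition. Your explicit choice $\bc_i=\delta_i\mathbf 1+\mu\bc_i'$ with Vandermonde vectors and finitely many excluded values of $\mu$ is a concrete version of the paper's Zariski-genericity choice in $U^m$; the only slip is that $\mathbf 1$ is the moment-curve point $t=1$, not $t=0$, which is harmless because the leading $\mu^{|K|}$-coefficient of each minor involves only the $\bc_i'$.
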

\begin{proof}
    Since $\pi$ is a composition of finitely many blow-ups along a smooth center, it is well known that there exists a vector $\ba=(a_i)_{i\in I}\in(\bR_{>0})^{|I|}$ such that $\frac{1}{m}\pi^*K_X-\sum_{i\in I}a_iE_i$ is ample. Then we can fix a Hermitian metric $h_i$ on $\cO_Y(E_i)$ such that $\frac{1}{m}\theta-\sum_{i\in I}a_i\Theta_{h_i}(E_i)>0$. Since K\"ahlerness is an open condition, we see that there is a neighbourhood $U\subset(\bR_{>0})^{|I|}$ of $\ba$ such that $\forall\bb=(b_i)_{i\in I}\in U$, $\frac{1}{m}\theta-\sum_{i\in I}b_i\Theta_{h_i}(E_i)$ is K\"ahler. Now we need to choose a point  $(\bb^1,\bb^2,\dots,\bb^m)\in U^m$ such that the first condition is satisfied.

    Given $K\subset I$, we have 
    $$\mathrm{rank}[b_{k}^\ell]_{k\in K,\ 1\leq \ell\leq m} <|K|\iff \mathrm{all\ } |K|\times|K|\mathrm{\ minors\ vanish}.$$
    The later condition defines a proper Zariski closed subset of $U^\nu$. Since there are only finitely many $K\subset I$, we see that the set 
    $$\left\{(\bb^1,\bb^2,\dots,\bb^m)\in U^m\ :\ \exists K\subset I \text{ such that } E_K\neq\varnothing\ \text{and }\mathrm{rank}[b_{k}^l]<|K| \right\}$$
    is also a proper Zariski closed set in $U^\nu$. Thus we can always choose a point $(\bc^1,\dots,\bc^m)\in U^m$ as desired.
\end{proof}

Now we let $m=\nu$ and choose $(c_i^l)_{i\in I,\ 1\leq l\leq \nu}$ as above. We define:
\begin{align*}
    \phi_\ell&=-\log(-\log\prod_{i\in I}|s_i|_{h_i}^{2c_i^\ell}),\quad 1\leq \ell\leq\nu;\\
    \phi&=\sum_{1\leq \ell\leq\nu} (1+\frac{n}{\nu})\phi_\ell.
\end{align*}

We note that $\phi_\ell\sim-\log(-\log\prod_{i\in I}|s_i|_{h_i}^{2})$ for every $\ell$. Then $\phi\sim-(n+\nu)\log(-\log\prod_{i\in I}|s_i|_{h_i}^{2})$, and hence it suffices to show $\phi$ is a lower bound. We first show that $\phi$ is $\theta$-psh. Consider $\phi_{\ell,\epsilon}:=-\log(-\log\prod_{i\in I}(|s_i|_{h_i}^{2}+\epsilon)^{c_i^\ell})$. Computing similarly as in (\ref{ddc psi_kep}), we have
\begin{align*}
    dd^c\phi_{\ell,\epsilon}&=\frac{d\log\prod_{i\in I}(|s_i|_{h_i}^{2}+\epsilon)^{c_i^\ell}\wedge d^c\log\prod_{i\in I}(|s_i|_{h_i}^{2}+\epsilon)^{c_i^\ell}}{\left(\log\prod_{i\in I}(|s_i|_{h_i}^{2}+\epsilon)^{c_i^\ell}\right)^2}+\frac{dd^c\log\prod_{i\in I}(|s_i|_{h_i}^{2}+\epsilon)^{c_i^\ell}}{-\log\prod_{i\in I}(|s_i|_{h_i}^{2}+\epsilon)^{c_i^\ell}}\\
    &\geq \frac{1}{-\log\prod_{i\in I}(|s_i|_{h_i}^{2}+\epsilon)^{c_i^\ell}} \cdot \sum_{i\in I}c_i^\ell dd^c\log(|s_i|_{h_i}^{2}+\epsilon)\\
    &=\frac{1}{-\log\prod_{i\in I}(|s_i|_{h_i}^{2}+\epsilon)^{c_i^\ell}}\cdot\sum_{i\in I} c_i^\ell \left( \frac{\epsilon\la Ds,Ds \ra}{(|s_i|^2_{h_i}+\epsilon)^2}-\frac{|s_i|^2_{h_i}}{|s_i|^2_{h_i}+\epsilon}\Theta_{h_i}(E_i) \right)\\
    &\geq\frac{1}{-\log\prod_{i\in I}(|s_i|_{h_i}^{2}+\epsilon)^{c_i^\ell}}\cdot\sum_{i\in I}\frac{|s_i|^2_{h_i}}{|s_i|^2_{h_i}+\epsilon}\cdot(-c_i^\ell\Theta_{h_i}(E_i)).
\end{align*}
Let $\epsilon\to0$, we get 
$$dd^c\phi_\ell\geq\frac{1}{-\log\prod_{i\in I}|s_i|_{h_i}^{2c_i^\ell}}\cdot\sum_{i\in I}-c_i^\ell\Theta_{h_i}(E_i).$$
Then we have
$$\frac{1}{n+\nu}\theta+dd^c\phi_\ell=\left(\frac{1}{n+\nu}-\frac{1}{-\log\prod_{i\in I}|s_i|_{h_i}^{2c_i^\ell}}\right)\theta +\frac{1}{-\log\prod_{i\in I}|s_i|_{h_i}^{2c_i^\ell}}\cdot(\theta-\sum_{i\in I}c_i^\ell\Theta_{h_i}(E_i))\geq 0$$
once we rescale $h_i$ such that $-\log\prod_{i\in I}|s_i|_{h_i}^{2c_i^\ell}\geq n+\nu$. Thus $\phi_\ell$ is $\frac{1}{n+\nu}\theta$-psh, and hence $\phi$ is $\theta$-psh.
Now the same proof of Lemma \ref{finiteE of psi_k} shows that $\phi\in\cE^1(Y,\theta)$.

\subsection*{Step 2: $\phi$ is a sub/super-solution}
We first establish an estimate of $\MA(\phi)$.

Let $A=\cup_i \mathrm{supp}(E_i)$. Since $\phi$ is locally bounded outside $A$, the non-pluripolar product $\la (\theta+dd^c\phi) \ra^n$ is the trivial extension of the measure $((\theta+dd^c\phi)|_{Y\setminus A})^n$. Thus, it suffices to do the computation on $Y\setminus A$, on which all things are smooth. 
Let $\t_\ell=\log\prod_{i\in I}|s_i|^{2c_i^l}_{h_i}$ and $\t=\log\prod_{i\in I}|s_i|^2_{h_i}$. Then it is easy to see that there is a $C_\t>0$ such that
\begin{equation}\label{Ctau}
    C_\t^{-1}\cdot \t\leq \t_\ell \leq C_\t\cdot\t,\quad\forall \ell\in\{1,\dots,\nu\}.
\end{equation}

On $Y\setminus A$, we have
\begin{equation}\label{computeMA(phi)}
\begin{aligned}
    (\theta+dd^c\phi)^n&=(\theta+\sum_{1\leq \ell\leq\nu} dd^c\phi_\ell)^n\\
    &=\left(\theta+\sum_{1\leq \ell\leq\nu}\frac{dd^c\t_\ell}{-\t_\ell} +\sum_{1\leq \ell\leq\nu}\frac{d\t_\ell\wedge d^c\t_\ell}{\t_\ell^2}\right)^n\\
    &=\left( (1-\sum_{1\leq \ell\leq\nu}\frac{1}{-\t_\ell})\theta+\sum_{1\leq \ell\leq\nu}\frac{\theta-\sum_ic_i^\ell\Theta_{h_i}(E_i)}{-\t_\ell} +\sum_{1\leq \ell\leq\nu}\frac{d\t_\ell\wedge d^c\t_\ell}{\t_\ell^2} \right)^n.
\end{aligned}
\end{equation}

Let $\omega_Y$ be a K\"ahler form on $Y$ such that $\omega_Y\geq\theta$. By Step 1, $\theta-\sum_{i\in I}c_i^\ell\Theta_{h_i}(E_i)$ is a K\"ahler form for every $\ell$, then we can find a constant $C_0>0$ such that 
\begin{equation}\label{C_0}
    C_0^{-1}\cdot \omega_Y\leq\theta-\sum_{i\in I}c_i^\ell\Theta_{h_i}(E_i)\leq C_0\cdot \omega_Y,\quad \forall \ell\in\{1,\dots,\nu\}.
\end{equation}

After rescaling $h_i$ if necessary, we may assume $\sum_\ell\frac{1}{-\t_\ell}\leq 1$. Then we have 
\begin{equation}\label{easy estimate}
    \theta\geq(1-\sum_{1\leq \ell\leq\nu}\frac{1}{-\t_\ell})\theta \geq0.
\end{equation}
Plug (\ref{C_0}), (\ref{Ctau}) and (\ref{easy estimate}) into (\ref{computeMA(phi)}) we get
\begin{equation}\label{2side est}
    \left( \theta+C_1\cdot\frac{\omega_Y}{-\t}+C_2\cdot\frac{1}{\t^2}\sum_{1\leq \ell\leq\nu}d\t_\ell\wedge d^c\t_\ell \right)^n
    \geq
    (\theta+dd^c\phi)^n
    \geq
    \left( C_1'\cdot\frac{\omega_Y}{-\t}+C_2'\cdot\frac{1}{\t^2}\sum_{1\leq \ell\leq\nu} d\t_\ell\wedge d^c\t_\ell \right)^n,
\end{equation}
where $C_1=\nu C_0C_\t^{-1}$, $C_2=\nu C_\t^{-2}$, $C_1'=\nu C_0^{-1}C_\t$, and $C_2'=\nu C_\t^2$. 

\paragraph*{Lower bound.} 
We show that $\phi+C'$ is a sub-solution for some constant $C'>0$, i.e.
$$(\theta+dd^c\phi)^n\geq\frac{e^{\phi+C'}dV_Y}{\prod_{i\in I}|s_i|_{h_i}^2}.$$
Since $\phi\in\cE^1(Y,\theta)$, by using \hyperref[comparison]{Comparison Principle} as before, this estimate implies $\pi^*\vphi_{KE}\geq\phi+C'$.
By (\ref{2side est}), it suffices to show
\begin{equation}\label{iso goal subsolution}
    \left( C_1'\cdot\frac{\omega_Y}{-\t}+C_2'\cdot\frac{1}{\t^2}\sum_{1\leq \ell\leq\nu} d\t_\ell\wedge d^c\t_\ell \right)^n\geq\frac{e^{\phi+C'}dV_Y}{\prod_{i\in I}|s_i|_{h_i}^2}.
\end{equation}
It suffices to show it locally near $A$. Let $y\in A$, Let $U$ be a chart centered at $y$ and equipped with a coordinate system $\{z_1,\dots,z_n\}$ such that the $E_i$'s that intersect with $U$ are defined by $(z_i=0),\ i=1,\dots,p$. Note that by definition we have $p\leq\nu$. On $U\setminus A$, we have
$$\frac{e^\phi dV_Y}{\prod_{i\in I}|s_i|^2_{h_i}}=\frac{dV_Y}{\prod_{1\leq \ell\leq v}\t_\ell^{1+\frac{n}{\nu}} \cdot\prod_{1\leq i\leq p}|z_i|^2\cdot(\mathrm{smooth\ term})}=\cO(1)\cdot\frac{\omega_{eucl}^n}{\t^{n+\nu}\cdot\prod_{1\leq i \leq p}|z_i|^2},$$
where $\omega_{eucl}=\sum_{1\leq i\leq n}\sqrt{-1}dz_i\wedge d\b{z}_i$ is the euclidean metric on $\bC^n$. Thus our goal (\ref{iso goal subsolution}) becomes: find constants $C'_U>0$ such that on $U\setminus A$ we have 
$$\left(\frac{\omega_Y}{-\t}+\frac{1}{\t^2}\sum_{1\leq \ell\leq\nu} d\t_\ell\wedge d^c\t_\ell \right)^n\geq\frac{C'_U\cdot\omega_{eucl}^n}{\t^{n+\nu}\cdot\prod_{1\leq i \leq p}|z_i|^2}.$$
On $U\setminus A$, we have
\begin{align*}
    \tau_\ell&=\log\left\{\prod_{1\leq i\leq p}|z_i|^{2c_i^\ell}\times\text{smooth function}\times\prod_{E_j\cap U=\varnothing}|s_j|^2_{h_j}\right\}\\
    &=\sum_{1\leq i\leq p}c_i^\ell\log|z_i|^2+\mathrm{smooth\ function},\\
    \d\tau_\ell&=\sum_{1\leq i\leq p}\frac{c_i^\ell dz_i}{z_i}+\gamma_\ell,
\end{align*}
where $\gamma_\ell$ is a smooth form across $A$. Let $B_U$ be a constant such that $\omega_Y\geq B_U\cdot\omega_{eucl}$ on $U$, then 
\begin{align*}
    \left(\frac{\omega_Y}{-\t}+\frac{1}{\t^2}\sum_{1\leq \ell\leq\nu} d\t_\ell\wedge d^c\t_\ell \right)^n&\geq\left(\frac{B_U\cdot\omega_{eucl}}{-\t}+\frac{1}{\t^2}\sum_{1\leq \ell\leq\nu} d\t_\ell\wedge d^c\t_\ell \right)^n\\
    &=\sum_{0\leq k\leq n}\binom{n}{k}\left( \frac{B_U\cdot\omega_{eucl}}{-\t} \right)^{n-k}\wedge\left( \frac{1}{\t^2}\sum_{1\leq \ell\leq\nu} d\t_\ell\wedge d^c\t_\ell \right)^k\\
    &\geq \left( \frac{B_U\cdot\omega_{eucl}}{-\t} \right)^{n-p}\wedge\left( \frac{1}{\t^2}\sum_{1\leq \ell\leq\nu} d\t_\ell\wedge d^c\t_\ell \right)^p\\
    &=\frac{B_U^{n-p}}{(-\t)^{n+p}}\omega_{eucl}^{n-p}\wedge\left(\sum_{1\leq \ell\leq\nu} d\t_\ell\wedge d^c\t_\ell\right)^p\\
    &\geq\frac{B_U^{n-p}}{(-\t)^{n+\nu}}\omega_{eucl}^{n-p}\wedge\left(\sum_{1\leq \ell\leq\nu} d\t_\ell\wedge d^c\t_\ell\right)^p.
\end{align*}
It follows from the definition of $[c_i^\ell]$ in \hyperref[step1]{Step 1} and Lemma \ref{linear algebra} below that 
$$\omega_{eucl}^{n-p}\wedge\left(\sum_{1\leq \ell\leq\nu} d\t_\ell\wedge d^c\t_\ell\right)^p\geq C'_3\frac{\omega_{eucl}^n}{\prod_{1\leq i\leq p}|z_i|^2}$$
Thus, by choosing $C_U'=B_U^{n-p}C_3'$, we are done.

\begin{lemm}\label{linear algebra}
    Let $\omega_{eucl}$ be the euclidean metric on $\bC^n$. Let $1\leq p\leq\nu\leq n$ be integers. Let $[c_i^\ell]_{1\leq i\leq p,\ 1\leq \ell\leq \nu}$ be a $p\times\nu$ matrix of rank $p$. Let $\alpha_\ell=\sum_{1\leq i\leq p} c_i^\ell\frac{ dz_i}{z_i}$ and let $\gamma_\ell$ be a smooth $(1,0)$-form on $\bC^n$, $\ell=1,\dots,\nu$. Then there is a neighbourhood $V$ of $0$ and a constant $C_V$ such that
    $$\omega_{eucl}^{n-p}\wedge\left(\sqrt{-1}\sum_{1\leq \ell\leq\nu} (\a_\ell+\gamma_l)\wedge (\bar\a_\ell+\bar\gamma_\ell)\right)^p\geq C_V\frac{\omega_{eucl}^n}{\prod_{1\leq i\leq p}|z_i|^2}$$
\end{lemm}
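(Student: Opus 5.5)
The plan is to reduce the inequality to a pointwise statement about Hermitian matrices, and then use that the matrix $[c_i^\ell]$ has full rank $p$, so that a Gram determinant stays positive near $0$. Throughout, we work on $V\setminus\{z_1\cdots z_p=0\}$, where all the forms are smooth.

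\emph{Step 1: matrix form.} Write
$$\a_\ell+\gamma_\ell=\sum_{a=1}^n m_{\ell a}\,dz_a,\qquad m_{\ell i}=\frac{c_i^\ell+z_i\gamma_{\ell,i}}{z_i}\ (1\leq i\leq p),\qquad m_{\ell k}=\gamma_{\ell,k}\ (k>p),$$
where $\gamma_\ell=\sum_a\gamma_{\ell,a}dz_a$ with smooth coefficients. Let $M=(m_{\ell a})$, a $\nu\times n$ matrix. Then
$$\beta:=\sqrt{-1}\sum_\ell(\a_\ell+\gamma_\ell)\wedge(\b\a_\ell+\b\gamma_\ell)=\sqrt{-1}\sum_{a,b}H_{ab}\,dz_a\wedge d\b z_b,\qquad H={}^t M\,\b M,$$
so $H$ is a positive semidefinite Hermitian $n\times n$ matrix.

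\emph{Step 2: reduce to a principal minor.} By the standard identity for mixed products with the flat form,
$$\omega_{eucl}^{n-p}\wedge\beta^p=\frac{(n-p)!\,p!}{n!}\,\sigma_p(H)\,\omega_{eucl}^n,$$
where $\sigma_p(H)$ is the $p$-th elementary symmetric function of the eigenvalues. This equals the sum of all $p\times p$ principal minors of $H$. Each such minor is $\geq0$ because $H\geq0$. Hence
$$\sigma_p(H)\geq\det\bigl(H_{ij}\bigr)_{1\leq i,j\leq p}.$$

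\emph{Step 3: factor out the poles.} Let $M_p$ be the $\nu\times p$ block of $M$ formed by the first $p$ columns. Then $M_p=N(z)\,D^{-1}$, where $D=\mathrm{diag}(z_1,\dots,z_p)$ and $N(z)_{\ell i}=c_i^\ell+z_i\gamma_{\ell,i}(z)$ is continuous with $N(0)={}^t[c_i^\ell]$. Therefore
$$\det(H_{ij})_{i,j\leq p}=\det\bigl({}^tM_p\b M_p\bigr)=\frac{\det\bigl({}^tN\b N\bigr)}{\prod_{i=1}^p|z_i|^2}.$$
The Gram determinant $\det({}^tN\b N)$ is a continuous function of $z$. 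At $z=0$ it is strictly positive, because $N(0)$ has rank $p$ by hypothesis. So on a small neighbourhood $V$ of $0$ it is bounded below by some $c>0$.

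\emph{Step 4: conclude.} Combining Steps 2 and 3 gives the inequality with $C_V=\frac{(n-p)!\,p!}{n!}\,c$.

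Step 3 is the only real point: one must check that the smooth perturbations $\gamma_\ell$ enter only as $O(|z_i|)$ corrections after multiplying by $z_i$, so they cannot destroy the rank condition near $0$. Everything else is routine linear algebra.
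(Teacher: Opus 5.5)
Your proof is correct and follows essentially the same route as the paper. In both, you factor $1/z_i$ out of the $i$-th column so that what remains is a continuous perturbation of the constant full-rank matrix $[c_i^\ell]$, and then bound $\omega_{eucl}^{n-p}\wedge\Theta^p$ from below by its $dz_1\cdots dz_p$ block. The only cosmetic difference is that the paper fixes one nonvanishing $p\times p$ minor of $[c_i^\ell]$ and keeps only those $p$ forms $\beta_{\ell_j}$. Your Gram determinant $\det({}^tN\bar N)$ is instead, by Cauchy--Binet, the sum of the squared moduli of all such minors, which avoids that choice and makes the dropping of the remaining positive terms (via $\sigma_p(H)\geq$ a principal minor) fully explicit.
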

\begin{proof}
Let
$$
\beta_\ell:=\alpha_\ell+\gamma_\ell,
\qquad
\Theta:=\sqrt{-1}\sum_{\ell=1}^{\nu}\beta_\ell\wedge \overline{\beta_\ell}.
$$
We want to prove that
$$
\omega_{\mathrm{eucl}}^{n-p}\wedge \Theta^p
\geq
C_V\frac{\omega_{\mathrm{eucl}}^n}{\prod_{i=1}^p |z_i|^2}
$$
in a neighbourhood of $0$.

Since the $p\times \nu$ matrix $[c_i^\ell]$ has rank $p$, there exists an indice subset 
$$L_0=\{\ell_1,\dots,\ell_p:1\leq \ell_1<\cdots<\ell_p\leq \nu\}$$
such that
$$\det(c_i^{\ell_j})_{1\leq i,j\leq p}\neq 0.$$

We write locally
$\gamma_\ell=\sum_{j=1}^n g_j^\ell\,dz_j,$ 
where the coefficients $g_j^\ell$ are smooth functions. Then
$$
\beta_\ell=
\sum_{i=1}^p \left(\frac{c_i^\ell}{z_i}+g_i^\ell\right)dz_i + \sum_{j=p+1}^n g_j^\ell\,dz_j.
$$
Consider the $p\times p$ matrix 
$$A=\left[\frac{c_i^{\ell_j}}{z_i}+g_i^\ell\right]_{1\leq i,j\leq p},$$
its determinant is
$$
\det(A)(z)
:=
\det\left(
\frac{c_i^{\ell_j}}{z_i}+g_i^{\ell_j}(z)
\right)_{1\leq i,j\leq p}=
\frac{1}{z_1\cdots z_p}
\det\left(
c_i^{\ell_j}+z_i g_i^{\ell_j}(z)
\right)_{1\leq i,j\leq p}.
$$
Since
$
\det(c_i^{\ell_j})\neq 0,
$
the continuity of determinant implies that, after shrinking $V$ if necessary, we have
$$
\left|
\det\left(
c_i^{\ell_j}+z_i g_i^{\ell_j}(z)
\right)
\right|
\geq c_0
$$
on $V$, for some constant $c_0>0$. Hence
$$
|\det(A)|^2
\geq
\frac{c_0^2}{\prod_{i=1}^p |z_i|^2}.
$$
Now we have
\begin{align*}
    \omega_{eucl}^{n-p}\wedge \Theta^p&=\omega_{eucl}^{n-p}\wedge\left( \sqrt{-1}\sum_{\ell=1}^{\nu}\beta_\ell\wedge \overline{\beta_\ell} \right)^p\\
    &\geq \omega_{eucl}^{n-p}\wedge\left(\iota_{L_0}|\det(A)|^2 \bigwedge_{i=1}^p(\sqrt{-1}dz_i\wedge d\b z_i )\right)\\
    &\geq \iota_{L_0}(n-p)!\cdot|\det(A)|^2\,\omega_{eucl}^n,
\end{align*}
where $\iota_{L_0}$ is a constant depending on $L_0$. some constant \(C>0\). Combining the previous inequalities, we have
\[
\omega_{eucl}^{n-p}\wedge \Theta^p
\geq
C\,\frac{\omega_{eucl}^n}
{\prod_{i=1}^p |z_i|^2},
\]
where $C=\iota_{L_0}\cdot (n-p)!\cdot c_0^2$.
\end{proof}

\paragraph*{Upper bound.} 
Recall that for a fixed $1\leq \ell\leq\nu$, $\phi_\ell$ is $\frac{1}{n+\nu}\theta$-psh, then we see that $(n-d+1)\phi_\ell$ is $\frac{n-d+1}{n+\nu}\theta$-psh, and hence $\theta$-psh. It obviously has finite energy. By replacing $\psi$ with $\phi_\ell$ in the proof of Theorem \ref{thmb=upperbdd}, we see that $(n-d+1)\phi_\ell+C$ is a super-solution of $(\ref{isoMA})$ for some $C>0$. Hence we have $\pi^*\vphi_{KE}\leq(n-d+1)\phi_\ell+C$.

\subsection*{End of proof.} We have shown that 
$$(n-d+1)\phi_\ell+C\geq\pi^*\vphi_{KE}\geq\phi+C',$$
for some $C,C'>0$. Since $\phi_\ell=-\log(-\log\prod_{i\in I}|s_i|^2_{h_i})+\cO(1)$, we see that
$$-(n-d+1)\log(-\log\prod_{i\in I}|s_i|^2_{h_i})+\cO(1)\geq\pi^*\vphi_{KE}\geq-(n+\nu)\log(-\log\prod_{i\in I}|s_i|^2_{h_i})+\cO(1).$$
Thus, we are done.

    \bibliographystyle{smfalpha}
    \bibliography{biblio1}

\end{document}